\documentclass{amsart}


\usepackage[textsize=tiny,textwidth=4cm]{todonotes}
\usepackage{amssymb}
\usepackage{amsfonts}
\usepackage{mathtools}
\usepackage{array}
\usepackage{capt-of}
\usepackage[skip=2pt,font=footnotesize]{caption}
\usepackage[shortcuts]{extdash}
\usepackage{adjustbox}
\usepackage{pgfplots}
\pgfplotsset{compat=1.12}

\usepackage{hyperref}
\hypersetup{bookmarks,
	raiselinks,
	colorlinks,
	citecolor=black,
	linkcolor=black,
	urlcolor=black,
	filecolor=black,
	menucolor=black}
\usepackage[capitalize,nameinlink]{cleveref}
\crefname{section}{section}{sections}
\crefname{subsection}{subsection}{subsections}
\Crefname{section}{Section}{Sections}
\Crefname{subsection}{Subsection}{Subsections}
\crefformat{equation}{\textup{#2(#1)#3}}
\crefrangeformat{equation}{\textup{#3(#1)#4--#5(#2)#6}}
\crefmultiformat{equation}{\textup{#2(#1)#3}}{ and \textup{#2(#1)#3}}
{, \textup{#2(#1)#3}}{, and \textup{#2(#1)#3}}
\crefrangemultiformat{equation}{\textup{#3(#1)#4--#5(#2)#6}}%
{ and \textup{#3(#1)#4--#5(#2)#6}}{, \textup{#3(#1)#4--#5(#2)#6}}{, and \textup{#3(#1)#4--#5(#2)#6}}

\Crefformat{equation}{#2Equation~\textup{(#1)}#3}
\Crefrangeformat{equation}{Equations~\textup{#3(#1)#4--#5(#2)#6}}
\Crefmultiformat{equation}{Equations~\textup{#2(#1)#3}}{ and \textup{#2(#1)#3}}
{, \textup{#2(#1)#3}}{, and \textup{#2(#1)#3}}
\Crefrangemultiformat{equation}{Equations~\textup{#3(#1)#4--#5(#2)#6}}%
{ and \textup{#3(#1)#4--#5(#2)#6}}{, \textup{#3(#1)#4--#5(#2)#6}}{, and \textup{#3(#1)#4--#5(#2)#6}}

\makeatletter
\DeclareRobustCommand*{\bbl@ap}[1]{\textormath{\textsuperscript{#1}}{^{\mathrm{#1}}}}%
\DeclareRobustCommand*{\bbl@ped}[1]{\textormath{$_{\mbox{\fontsize\sf@size\z@ \selectfont#1}}$}{_\mathrm{#1}}}%
\let\ap\bbl@ap
\let\ped\bbl@ped
\makeatother

\newtheorem{theorem}{Theorem}[section]

\newtheorem{proposition}[theorem]{Proposition}
\newtheorem{lemma}[theorem]{Lemma}
\newtheorem{assumption}[theorem]{Assumption}
\newtheorem{remark}[theorem]{Remark}

\newtheorem{example}[theorem]{Example}

\newcommand*{\tcolvec}[1]{\begin{psmallmatrix}#1\end{psmallmatrix}}
\newcommand{\Vhpr}{{{V_h}^{\hspace{-0.25em}\prime}}}

\newcommand{\dd}{\,\mathrm{d}}

\newcommand{\supp}{\operatorname{supp}}

\newcommand{\spanlin}{\operatorname{span}} 
\newcommand{\clos}{\operatorname{clos}}  
\newcommand{\la}{\langle} 
\newcommand{\ra}{\rangle}
\newcommand{\R}{\mathbb{R}} 
\newcommand{\N}{\mathbb{N}}

\newcommand{\cX}{{\mathcal{X}}}
\newcommand{\cXd}{{\mathcal{X}_{\delta}}}

\newcommand{\cY}{{\mathcal{Y}}}
\newcommand{\cYd}{{\mathcal{Y}_{\mkern-2mu\delta}}}


\ifpdf
\hypersetup{
  pdftitle={Stable and efficient Petrov-Galerkin methods for a kinetic Fokker-Planck equation},
  pdfauthor={J. Brunken and K. Smetana}
}
\fi

\title[Petrov-Galerkin methods for a kinetic Fokker-Planck equation]{Stable and efficient Petrov-Galerkin methods for a kinetic Fokker-Planck equation}

\author{Julia Brunken}
\address{University of M{\"u}nster, Applied Mathematics, 
	Einsteinstr.\ 62, 48149 M{\"u}nster, Germany, 
	{julia.brunken@uni-muenster.de}}

\author{Kathrin Smetana}
\address{University of Twente, Faculty of Electrical Engineering, Mathematics \& Computer Science,
	Zilverling,  P.O. Box 217, 7500 AE Enschede, The Netherlands. Current address: Department of Mathematical Sciences, Stevens Institute of Technology, 1 Castle Point Terrace, Hoboken, NJ 07030, United States of America, {ksmetana@stevens.edu}.
}
%
\date{\today}

\thanks{The work of Julia Brunken was supported by the German Federal Ministry of Education and Research under grant BMBF 05M2016 - GlioMaTh and by the Deutsche Forschungsgemeinschaft (DFG, German Research Foundation) under Germany's Excellence Strategy EXC 2044 –390685587, Mathematics Münster: Dynamics–Geometry–Structure.}


\subjclass[2010]{65N30, 65M12, 65J10}

\keywords{Kinetic Fokker-Planck equation, Petrov-Galerkin method, well-posedness, inf-sup stability}

\begin{document}
	
	\begin{abstract}
		We propose a stable Petrov-Galerkin discretization of a kinetic Fokker-Planck equation constructed in such a way that uniform inf-sup stability can be inferred directly from the variational formulation. 
		Inspired by well-posedness results for parabolic equations, we derive a lower bound for the dual inf-sup constant of the Fokker-Planck bilinear form by means of stable pairs of trial and test functions.
		The trial function of such a pair is constructed by applying the kinetic transport operator and the inverse velocity Laplace-Beltrami operator to a given test function. 
		For the Petrov-Galerkin projection we choose an arbitrary discrete test space and then define the discrete trial space using the same application of transport and inverse Laplace-Beltrami operator. 
		As a result, the spaces replicate the stable pairs of the continuous level and we obtain a well-posed numerical method with a discrete inf-sup constant identical to the inf-sup constant of the continuous problem independently of the mesh size.
		We show how the specific basis functions can be efficiently computed by low-dimensional elliptic problems, and confirm the practicability and performance of the method {with numerical experiments}.
	\end{abstract}
	
	\maketitle

\section{Introduction}

In this manuscript we develop a stable and efficient Petrov-Galerkin approximation scheme for {certain} kinetic Fokker-Planck equations{, including the equation}
\begin{equation} \label{eq:fp_intro}
\partial_t u((t,x),v) + v \cdot \nabla_x u((t,x),v) = \Delta_{v} \left (\tfrac{u((t,x),v)}{q(x,v)} \right ) \quad \text{in } \Omega = I_t \times \Omega_{x} \times \Omega_v \\
\end{equation}
with suitable inflow boundary conditions. \Cref{eq:fp_intro} describes a particle density $u$ dependent on time $t \in I_t$, position $x \in \Omega_x \subset \R^{d}$, $d\in \{2,3\}$, and { direction} $v \in \Omega_v = S^{d-1}$, {where $S^{d-1}$ is the $(d-1)$-dimensional unit sphere and $q \in L^\infty(\Omega_{x} \times \Omega_v)$ with $q > 0$ a.e.\ and $q(x,\cdot) \in C^1(\Omega_v)$ for a.e.\ $x \in \Omega_{x}$.}

Formulations for particle densities governed by kinetic equations arise in various contexts. Beyond the classical applications of radiative transfer and kinetic gas theory (see e.g.\ \cite{DavSyk1957,DudMar1979}), kinetic equations are, for instance, also used to describe densities of tumor cells in multiscale descriptions of tumor spreading \cite{EHKS15,Hunt2017}. { In this manuscript, we are mainly interested in the  latter application. More precisely, we focus on a discretization of a prototype of a glioma tumor equation described in \cite{Hunt2017},  where the velocity is driven by a Brownian motion resulting in the specific Laplace-Beltrami term of \eqref{eq:fp_intro}. However, other variants including, e.g., $\nabla_v$ terms, are also included in the more general setting considered in the course of this work.} 

We aim for a finite element discretization with guaranteed stability. Therefore, we focus on a Petrov-Galerkin discretization based on a stable variational formulation of \eqref{eq:fp_intro}, since in such a framework the well-posedness of the discrete scheme can often be inferred from respective results on the continuous level, see e.g.\ \cite{DHSW2012,DemGop11,SS2009,UP2014}. 

First, we establish a full-dimensional variational formulation for \eqref{eq:fp_intro} based on Bochner-type spaces{,} mapping the combined space-time domain $\Omega_{t,x} = I_t \times \Omega_{x}$ to a Sobolev space defined on the velocity domain $\Omega_v$ similar to spaces defined in \cite{AM2019,Car1998}.
Taking the viewpoint that the Fokker-Planck equation {can} be interpreted as a ``generalization'' of a parabolic equation with a $(d+1)$-dimensional kinetic transport operator $\partial_t + v \cdot \nabla_x$ instead of a one-dimensional time derivative $\partial_t$, we analyze the well-posedness of the variational formulation for \eqref{eq:fp_intro} by combining respective approaches developed for parabolic equations \cite{EG2004,SS2009, UP2014} and for transport equations \cite{BSU2019,DHSW2012,DemGop11}. 
We show existence of a weak solution by verifying the dual inf-sup condition{.} To that end, similarly to \cite{EG2004,SS2009}{,} specific function pairs in the trial and test spaces are constructed{.} We associate a test space function $p$ to a trial space function roughly defined as $w_p = p - (\Delta_v)^{-1} (\partial_t p + v \cdot \nabla_x p)$. Then the bilinear form evaluated in $w_p$ and $p$ can be bounded from below by the respective norms of $w_p$ and $p$, which leads to a lower bound for the dual inf-sup constant. This approach is a generalization of proofs for parabolic equations using a variant of $w_p$ containing only the time derivative instead of the kinetic transport operator \cite{EG2004,SS2009} and of proofs for transport equations, where a ``stable function pair'' consists roughly of { $-(\partial_t p + v \cdot \nabla_x p)$ and $p$, when choosing the kinetic transport operator in the linear transport equation}, see \cite{BSU2019,DHSW2012,DemGop11}. 
Under an additional assumption on the global traces of certain functions, we also show uniqueness of the solution similar to proofs for parabolic equations \cite{EG2004} and transport equations \cite{Azerad1996}, and have a stability estimate dependent on the inf-sup constant{,} which is similar to the respective estimates for parabolic equations.  

To design the Petrov-Galerkin discretization, we use problem-specific trial spaces ensuring stability:  
We first choose an arbitrary discrete test space  $\cYd$ and then define the discrete trial space roughly as $\cXd = \cYd + (\Delta_v)^{-1} (\partial_t + v \cdot \nabla_x )\cYd$. 
The spaces thus consist of pairs $w_p^\delta, p^\delta$ that are the discrete counterparts of the pairs $w_p,p$ used in the proof for the lower bound of the dual inf-sup constant.  
This approach automatically yields a well-posed discrete problem with the same stability constant as for the continuous problem independently of the choice of the test space and thus of the mesh size.
The strategy to use an application of the transport operator for defining a stable trial space was already used for linear first-order transport equations \cite{BSU2019} and for the wave equation \cite{HPSU2019} as an alternative to computing stable test spaces by approximately inverting the transport operator \cite{DHSW2012,DemGop11}. 
Our choice ensures that the spaces can be efficiently computed in the course of the numerical scheme, where we apply the high-dimensional transport operator and only solve low-dimensional elliptic problems in the velocity domain due to the inverse Laplace-Beltrami operator. 
As a result, we can guarantee the stability of the method 
with low-dimensional computations that are not dominant in the computational costs of the full solution process.

Weak solutions and variational formulations for different types of kinetic Fokker-Planck equations have been defined and analyzed in various works, see e.g.\ \cite{AM2019,Car1998,DegMas1987,HJJ18,SheHan13}. However, these approaches focus on the properties of the weak solution without an orientation towards a subsequent discretization.
On the other hand, discretizations of kinetic Fokker-Planck equations are often not based on the direct connection to a weak solution or do not specifically consider stability estimates. 
In \cite{LehtikangasEtAl10}, a finite element discretization of a kinetic Fokker-Planck equation is described, where the well-posedness of the discrete problem is however not analyzed. 
Applying the framework of \cite{EggSchl12}, a mixed variational formulation  with a subsequent discretization for a generalized Fokker-Planck equation is proposed in \cite{HLST13}.
In the context of neuronal networks, a Fokker-Planck equation is discretized with finite differences in \cite{CCT2011}.
Another well-established approach to discretize kinetic equations is the method of moments, applied to Fokker-Planck equations, for instance, in \cite{FHK06,SAFK14}, while a related approach in the context of hierarchical model reduction is proposed in \cite{BLOS16}.
For the related Vlasov-Fokker-Planck system there are, for instance, works based on finite differences \cite{Schaeffer1998, WolOzi2009} and streamline-diffusion discontinuous Galerkin approximations \cite{AsaKow2005,AsaSop2007}. For the more general class of equations with nonnegative characteristic form, discontinuous Galerkin methods \cite{HSS2002,HS2001} and also sparse tensor approximations \cite{SST2008} have been developed. 

This paper is structured as follows. After a more detailed description of the considered Fokker-Planck equation in \cref{sect:FPeq}, we introduce the suitable Bochner-type function spaces and establish density and trace properties in \cref{sect:function_spaces}. We then derive the variational formulation and prove the existence and uniqueness results in \cref{sect:var_form}. In \cref{sect:discretization}, we introduce the discrete scheme, show well-posedness and describe an efficient computation. These properties of the proposed method are finally confirmed for a numerical example in \cref{sect:num_exp}.

\section{The kinetic Fokker-Planck equation} \label{sect:FPeq}

In this paper we consider a simplified version of the kinetic Fokker-Planck equation developed in \cite[sect. 2.4.2]{Hunt2017} that gives a mesoscopic description of the density of glioma tumor cells. 
Let $\Omega_x \subset \R^d$, $d \in \{2,3\}$ be the spatial domain\footnote{One can also define a Fokker-Planck equation on a one-dimensional spatial domain, where the velocity has to be defined as a one-dimensional projection variable, see, e.g., \cite{SAFK14}. We leave out this special case for ease of presentation.} with piecewise $C^1$ boundary that is globally Lipschitz and let $I_t := (0,T)$ be the time interval. Moreover, let the velocity domain be the $(d-1)$-dimensional unit sphere $\Omega_v := S^{d-1}$, which corresponds to the assumption of particles with constant speed but varying direction. As we will often treat space and time variables simultaneously, we denote by $\Omega_{t,x} := I_t \times \Omega_x$ the space-time domain. The full domain is defined as $\Omega := \Omega_{t,x} \times \Omega_v$.  

{To prescribe suitable inflow boundary conditions, we first define relevant boundaries. First, we denote by
	\begin{equation*}
	\hat \Gamma := \{0, T\} \times \bar \Omega_x \times \Omega_v \cup [0,T] \times \partial \Omega_x \times \Omega_v
	\end{equation*}
	the essential boundary of $\Omega$.}
Then, we define the spatial {out- and inflow} domains
$
\Gamma_{\pm}^x(v) := \{ x \in \partial \Omega_x : n(x) \cdot v \gtrless 0 \} \subset \partial \Omega_x,
$
where $n(x)$ is the unit outer normal to $\partial \Omega_x$ at $x$. The full {out- and inflow} domains {$\Gamma_+$ and $\Gamma_-$} are then defined as
\begin{equation*}
\Gamma_{\pm} := \left \{ 
((t,x),v) \in \partial \Omega_{t,x} \times \Omega_v \,:\, 
\tcolvec{ 1 \\ v  } \cdot n(t,x) \gtrless 0
\right \} \subset {\hat\Gamma},
\end{equation*}
where $n(t,x)$ is the unit outer normal to $\partial \Omega_{t,x}$ at $(t,x)$.
{The sets} $\Gamma_{\pm}$ thus contain both the temporal and the spatial boundaries, i.e., $\Gamma_-$ contains the ``initial boundary'' and the ($v$-dependent) spatial inflow boundary whereas $\Gamma_+$ contains the final time boundary and the spatial outflow boundary. 

The strong form of the Fokker-Planck equation then reads
\begin{equation} \label{strong_form_with_q}
\begin{aligned}
\partial_t u((t,x),v) + v \cdot \nabla_x u((t,x),v) &= \Delta_{v} \left (\tfrac{u((t,x),v)}{q(x,v)} \right )  &&\text{in } \Omega,\\
u((t,x),v) &= g((t,x),v) &&\text{on } \Gamma_-,
\end{aligned}
\end{equation}
where $\Delta_{v}$ is the Laplace-Beltrami operator on the unit sphere $\Omega_v = S^{d-1}$, {$q \in L^\infty(\Omega_{x} \times \Omega_v)$ is the so-called ``tissue fiber orientation distribution'' satisfying $q(x,\cdot) \in C^1(\Omega_v)$ for a.e.\ $x \in \Omega_{x}$ and $q \geq \alpha_q > 0$ a.e.\ in $\Omega_{x} \times \Omega_v$} and $g : \Gamma_- \to \R$ is the inflow boundary condition that contains the initial condition $g|_{\{t = 0\}}$ as well as the spatial inflow boundary condition $g|_{\Gamma_-^x(v)}, v \in \Omega_v$. { Since $q$ is assumed to be sufficiently regular, we can bring the respective differential operator in \cref{strong_form_with_q} in divergence form.´}

In \cref{sect:var_form}, we develop a variational formulation for this equation, where we allow for a more general differential operator on $\Omega_v$ and give specific conditions on $q$ and $g$ leading to well-posedness.

\section{Function spaces} \label{sect:function_spaces}

To develop a variational formulation for \cref{strong_form_with_q} we first introduce the necessary function spaces. Since we aim for a {full space-time-velocity formulation}, we use Bochner spaces mapping the space-time domain $\Omega_{t,x}$ to a space of functions on $\Omega_v$.

We start with the function space for the velocity variable: Since the equation contains a Laplace-Beltrami operator on the velocity domain $\Omega_v = S^{d-1}$, we define $V := H^1(\Omega_v) \subset L^2(\Omega_v)$ as the Sobolev space of weakly differentiable functions on the surface $\Omega_v = S^{d-1}$ with {squared} norm $\|\phi\|_{V}^2 = \|\phi\|_{L^2(\Omega_v)}^2+\|\nabla_v \phi\|_{L^2(\Omega_v)}^2$. For details on the definition of Sobolev spaces on manifolds, see \cite{DE2013, Hebey2000}.
We denote the dual space of $V$ by $V' := H^{-1}(\Omega_v)$.
{The space} $V$ is a dense subspace of $L^2(\Omega_v)$ and we will make use of the Gelfand triple $V \hookrightarrow L^2(\Omega_v) \hookrightarrow V'$, where we denote the dual pairing by $\la \cdot, \cdot \ra_{V'\!,V}$.

As {a} function space for the full domain, we will use the space $L^2(\Omega_{t,x}; V)$ with {squared} norm
\begin{equation}\label{eq:L2OmegaxtV}
\|w\|_{L^2(\Omega_{t,x};V)}^2 = \int_{\Omega_{t,x}} \|w(t,x)\|_{V}^2 \dd (t,x).
\end{equation}
{
	From now on, we will denote the kinetic advection field $\tcolvec{1\\v}$ by $k \in C^1(\bar\Omega,\R^{d+1})$, $k((t,x),v) := \tcolvec{1\\v}$, so that the kinetic space-time transport operator is given as $k \cdot \nabla_{t,x} p  = \partial_t p + v \cdot \nabla_x p$. 
	We then define }
\begin{equation} \label{eq:def_h1kin}
H^1_{\mathrm{FP}}(\Omega) := \{ p \in L^2(\Omega_{t,x}; V) \,:\, {k} \cdot \nabla_{t,x} p \in L^2(\Omega_{t,x};V') \},
\end{equation}
with {squared} norm
\begin{equation} \label{eq:h1kin_norm}
\|p\|_{H^1_{\mathrm{FP}}(\Omega)}^2 := 
\|p\|_{L^2(\Omega_{t,x}; V)}^2 + \|{k} \cdot \nabla_{t,x} p\|_{L^2(\Omega_{t,x};V')}^2.
\end{equation} 
This definition is similar to the spaces used for other variants of the kinetic Fokker-Planck equation{,} e.g.{,} in \cite{AM2019,BalPal2020,Car1998}. We use ideas from \cite{AM2019} to show the following:
\begin{proposition} \label{prop:density}
	The set $C^\infty(\bar\Omega_{t,x} \times \Omega_v)$ is dense in $H^1_{\mathrm{FP}}(\Omega)$.
\end{proposition}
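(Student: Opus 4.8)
The plan is to prove the density in two stages: first approximate a given $p\in H^1_{\mathrm{FP}}(\Omega)$ by functions that are smooth up to the space-time boundary $\hat\Gamma$ but still only $H^1$ in the velocity variable, i.e.\ by elements of $C^\infty(\bar\Omega_{t,x};V)$, and then regularize these in $v$.

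For the first stage I would follow the classical Meyers--Serrin/Friedrichs-type argument for Sobolev spaces on Lipschitz domains, executed simultaneously in the Hilbert spaces $V$ (for $p$) and $V'$ (for $k\cdot\nabla_{t,x}p$). Pick a finite partition of unity $\{\chi_j\}$ subordinate to a cover of $\bar\Omega_{t,x}$ by one interior patch and finitely many boundary patches. By the product rule $k\cdot\nabla_{t,x}(\chi_j p)=\chi_j\,k\cdot\nabla_{t,x}p+(k\cdot\nabla_{t,x}\chi_j)\,p\in L^2(\Omega_{t,x};V')$, so each $\chi_j p$ lies in $H^1_{\mathrm{FP}}(\Omega)$ and it suffices to approximate it. Since $\Omega_{t,x}=I_t\times\Omega_x$ is a product of Lipschitz domains, hence Lipschitz, on each boundary patch there is a fixed direction $e$ pointing into $\Omega_{t,x}$ (a combination of the inward spatial direction and $\pm e_t$ near the edges $\{0,T\}\times\partial\Omega_x$); translating $\chi_j p$ by a small amount $\eta>0$ in the direction $e$ moves its support strictly inside $\Omega_{t,x}$, after which a mollification in the $(t,x)$-variables at scale $\eps\ll\eta$ yields a function in $C^\infty(\bar\Omega_{t,x};V)$, compactly supported in the patch. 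The decisive simplification is that the advection field $k((t,x),v)=\tcolvec{1\\v}$ does not depend on $(t,x)$: translation, zero-extension of the inward-translated (compactly-supported-inside) function, and the $(t,x)$-mollification all commute \emph{exactly} with $k\cdot\nabla_{t,x}$, so no commutator terms arise and $k\cdot\nabla_{t,x}$ of the regularization of $\chi_j p$ is simply the regularization of $k\cdot\nabla_{t,x}(\chi_j p)$. Letting $\eta,\eps\to0$ along a suitable diagonal, the resulting functions converge to $\chi_j p$ in $L^2(\Omega_{t,x};V)$ and their transport converges to $k\cdot\nabla_{t,x}(\chi_j p)$ in $L^2(\Omega_{t,x};V')$ by strong continuity of translation and the approximate-identity property of mollifiers in these Bochner spaces; summing over $j$ handles $p$.

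For the second stage I may now assume $p\in C^\infty(\bar\Omega_{t,x};V)$, so that $\partial_t p$ and each $\partial_{x_i}p$ are genuine elements of $C^\infty(\bar\Omega_{t,x};V)\subset L^2(\Omega_{t,x};V)$. Let $P_N$ be the $L^2(\Omega_v)$-orthogonal projection onto the span of the spherical harmonics on $S^{d-1}$ of degree at most $N$, i.e.\ the first eigenspaces of $-\Delta_v$. Because $\|\cdot\|_V^2$ is, up to equivalence, the weighted $\ell^2$-norm of the spherical-harmonic coefficients with weights $1+\lambda_\ell$, the operator $P_N$ is a contraction on $V$ and $P_N\to\mathrm{id}$ strongly on $V$; moreover $P_N$ acts only in $v$, hence commutes with $\partial_t$ and with each $\partial_{x_i}$, and multiplication by a coordinate component $v_i$ (a smooth function on $S^{d-1}$) is bounded on $V$. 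Hence $P_N p\to p$ in $L^2(\Omega_{t,x};V)$ and
\begin{equation*}
k\cdot\nabla_{t,x}(P_N p)=P_N(\partial_t p)+\sum_i v_i\,P_N(\partial_{x_i}p)\ \longrightarrow\ \partial_t p+\sum_i v_i\,\partial_{x_i}p=k\cdot\nabla_{t,x}p
\end{equation*}
in $L^2(\Omega_{t,x};V)$, and a fortiori in $L^2(\Omega_{t,x};V')$, each convergence being a dominated-convergence statement over $\Omega_{t,x}$ using the pointwise strong convergence $P_N\to\mathrm{id}$ and the uniform bound $\|P_N\cdot\|_V\le\|\cdot\|_V$. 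Since $P_N p$ is a finite sum of products of a function in $C^\infty(\bar\Omega_{t,x})$ with a spherical harmonic, it belongs to $C^\infty(\bar\Omega_{t,x}\times\Omega_v)$. A diagonal argument combining the two stages completes the proof.

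I expect the genuine work to be the boundary bookkeeping in the first stage --- making the inward-translation precise near the lateral boundary $[0,T]\times\partial\Omega_x$, near the temporal faces $\{0,T\}\times\bar\Omega_x$, and near the edges where they meet, and verifying the commutations with zero-extension there. This is classical and is exactly the place where the cited ideas from \cite{AM2019} enter; it is crucial that one never has to change velocity variables or flatten in a way that would turn $k$ into a variable-coefficient field, since then a DiPerna--Lions-type commutator lemma would be needed instead. By contrast the velocity-direction step is soft, because $\Omega_v=S^{d-1}$ is a compact boundaryless manifold and $v$ enters $k$ only as a multiplier; in particular one should \emph{not} attempt to truncate in $v$ before mollifying in $(t,x)$, as $P_N p$ need not lie in $H^1_{\mathrm{FP}}(\Omega)$ for a general $p$.
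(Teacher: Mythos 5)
Your overall strategy coincides with the paper's: a Friedrichs/Meyers--Serrin-type mollification in the $(t,x)$-variables (the step the paper attributes to Armstrong--Mourrat) followed by a truncated spherical-harmonic expansion in $v$. The second stage is correct as written: $P_N$ is a contraction on $V$ converging strongly to the identity, it commutes with $\partial_t$ and $\partial_{x_i}$, and after the first stage the individual derivatives $\partial_t p$, $\partial_{x_i}p$ are honest elements of $L^2(\Omega_{t,x};V)$, so the dominated-convergence argument goes through; your observation that one must not truncate in $v$ before regularizing in $(t,x)$ (because $P_N$ does not commute with multiplication by $v_i$) is also correct.

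There is, however, a concrete error in the boundary step of the first stage. For a boundary patch you translate $\chi_j p$ so that its support moves strictly inside $\Omega_{t,x}$, zero-extend, and mollify, asserting that zero-extension commutes with $k\cdot\nabla_{t,x}$. It does not: $\chi_j p$ has no reason to vanish on $U_j\cap\partial\Omega_{t,x}$ (take $p$ constant), so the zero-extension of the inward-translated function jumps across the displaced boundary surface $\Sigma$, which is now an interior hypersurface of $\Omega_{t,x}$. Its distributional transport derivative therefore contains a surface term proportional to $(k\cdot n)\,\delta_\Sigma$ wherever $k\cdot n\neq 0$ (which is everywhere on the temporal faces and generically on the lateral boundary); this term is singular in $(t,x)$, so the $V'$-valued norm does not absorb it, and after mollification at scale $\eps$ it becomes an $L^2(\Omega_{t,x};V')$ function of norm of order $\eps^{-1/2}$ concentrated near $\Sigma$. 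The approximants then fail to converge in the graph norm. The classical argument goes in the opposite direction: with $e$ the inward cone direction of the patch, set $u_\eta(y):=(\chi_j p)(y+\eta e)$, which is defined from \emph{interior} values of $p$ on a full neighborhood of $\supp\chi_j\cap\bar\Omega_{t,x}$; mollifying at scale $\eps\ll\eta$ then only samples points of $\Omega_{t,x}$, no extension across $\partial\Omega_{t,x}$ is ever differentiated, and translation and convolution commute exactly with $k\cdot\nabla_{t,x}$ as you intend. With that correction (the interior patch needs no translation at all), the first stage, and hence the proof, is sound and matches the paper's route.
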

\begin{proof}
	{
		For the proof one constructs approximations of a function $f \in H^1_{\mathrm{FP}}(\Omega)$ by a mollification in $\Omega_{t,x}$ analogously to  \cite[Prop.\ 7.1]{AM2019} and a suitable basis expansion in $\Omega_v$. For more details see the supplementary material.
	}
\end{proof}

To discuss the boundary behavior of functions in $H^1_{\mathrm{FP}}(\Omega)$, we introduce weighted $L^2$-spaces{,} as usually used for transport and kinetic equations (e.g. \cite{Bar70,Cessenat1984}, \cite[XXI, \S 2]{DauLio1993_Vol6}) and for different versions of the kinetic Fokker-Planck equation \cite{AM2019,Car1998}. For any $\Gamma \subseteq {\hat\Gamma}$ we introduce $L^2(\Gamma,$ $ |{k} \cdot n|)$ with {squared} norm
\begin{equation}
\|w\|^2_{L^2(\Gamma, |{k} \cdot n|)} := \int_{\Gamma} w^2 \left | {k} \cdot n \right | \dd s.  
\end{equation}
Then, we can show that functions in $H^1_{\mathrm{FP}}(\Omega)$ admit local traces  {{on $\Gamma_+ \cup \Gamma_-$}:
	\begin{proposition} \label{lem:trace}
		For every compact set $K \subset \Gamma_+$ (resp. $K \subset \Gamma_-$), the trace operator $w \mapsto w|_{K}$ from $C^\infty(\bar \Omega)$ to $L^2(K, |{k} \cdot n|)$ extends to a continuous linear operator on $H^1_{\mathrm{FP}}(\Omega)$.
	\end{proposition}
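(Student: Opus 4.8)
The plan is to follow the standard strategy for trace theorems of transport-type spaces: establish an integration-by-parts (Green's) identity for smooth functions, use it together with a suitable cutoff to control the weighted boundary $L^2$-norm on a compact piece $K$ of the characteristic boundary by the $H^1_{\mathrm{FP}}(\Omega)$-norm, and then invoke the density result of \Cref{prop:density} to extend the trace operator by continuity. Concretely, I would first fix a compact set $K \subset \Gamma_+$ (the case $K\subset\Gamma_-$ is symmetric, replacing $+$ by $-$). Since $K$ is compact and $\Gamma_+$ is relatively open in $\hat\Gamma$, I can choose a cutoff function $\chi \in C^\infty_c(\R^{d+1})$, depending only on $(t,x)$, with $0\le\chi\le1$, $\chi\equiv1$ on a neighborhood of the projection of $K$ onto $\bar\Omega_{t,x}$, and with $\supp\chi$ meeting $\partial\Omega_{t,x}$ only in the ``outflow'' region where $k\cdot n>0$ (this is possible because, roughly, the projection of $K$ stays away from the inflow/grazing part $\Gamma_-\cup\Gamma_0$ and from corners of $\partial\Omega_{t,x}$; here one uses that $\partial\Omega_x$ is piecewise $C^1$ and globally Lipschitz).

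The core computation is the following. For $w \in C^\infty(\bar\Omega)$, multiply $k\cdot\nabla_{t,x}(\chi w^2) = 2\chi w\, (k\cdot\nabla_{t,x} w) + w^2\,(k\cdot\nabla_{t,x}\chi)$ and integrate over $\Omega$; the divergence theorem in $\Omega_{t,x}$ (for fixed $v$, then integrated over $\Omega_v$) turns the left-hand side into a boundary integral
\[
\int_{\partial\Omega_{t,x}\times\Omega_v} \chi\, w^2\,(k\cdot n)\dd s .
\]
On $\supp\chi$ we have $k\cdot n\ge 0$, and $\chi\equiv1$ near $K$, so this boundary integral dominates $\|w\|_{L^2(K,|k\cdot n|)}^2$ from above. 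The two volume terms are bounded as follows: the term with $k\cdot\nabla_{t,x}\chi$ is controlled by $\|\nabla_{t,x}\chi\|_{L^\infty}\,\|w\|_{L^2(\Omega_{t,x};V)}^2 \le C_\chi\|w\|_{H^1_{\mathrm{FP}}(\Omega)}^2$ (using the continuous embedding $L^2(\Omega_{t,x};L^2(\Omega_v))\hookrightarrow$ implicit in the $V$-norm), while the cross term is handled by writing $\int_\Omega 2\chi w\,(k\cdot\nabla_{t,x}w)\dd(t,x) \le 2\int_{\Omega_{t,x}}\|\chi w(t,x)\|_V\,\|k\cdot\nabla_{t,x}w(t,x)\|_{V'}\dd(t,x)$ via the $V$--$V'$ duality pairing, then Cauchy--Schwarz in $(t,x)$ and Young's inequality, which gives a bound by $C\|w\|_{H^1_{\mathrm{FP}}(\Omega)}^2$. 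Collecting terms yields $\|w\|_{L^2(K,|k\cdot n|)}\le C(K)\,\|w\|_{H^1_{\mathrm{FP}}(\Omega)}$ for all $w\in C^\infty(\bar\Omega)$, with $C(K)$ independent of $w$.

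Finally, since $C^\infty(\bar\Omega_{t,x}\times\Omega_v) = C^\infty(\bar\Omega)$ is dense in $H^1_{\mathrm{FP}}(\Omega)$ by \Cref{prop:density}, the bounded linear operator $w\mapsto w|_K$ from the dense subspace into the Banach space $L^2(K,|k\cdot n|)$ extends uniquely to a bounded linear operator on all of $H^1_{\mathrm{FP}}(\Omega)$; a Cauchy-sequence argument shows the extension does not depend on the approximating sequence.

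I expect the main obstacle to be the construction of the cutoff $\chi$: one must ensure simultaneously that $\chi$ localizes near $K$, that $\supp\chi\cap(\partial\Omega_{t,x}\times\Omega_v)$ avoids the inflow set $\Gamma_-$ and the grazing set where $k\cdot n=0$, and that $\supp\chi$ stays away from the non-smooth part of $\partial\Omega_{t,x}$ (corners and the edges $\{0,T\}\times\partial\Omega_x$) so that the divergence theorem applies cleanly. This is where the compactness of $K$ and the piecewise-$C^1$/Lipschitz regularity of $\partial\Omega_x$ are essential, and why only \emph{local} traces (on compact subsets of $\Gamma_\pm$, not on all of $\Gamma_\pm$) are claimed.
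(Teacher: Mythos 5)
Your overall strategy---cutoff, integration by parts in $(t,x)$, the $V$--$V'$ duality bound on the volume terms, and extension by density via \cref{prop:density}---is exactly the one the paper uses. However, there is a genuine gap in your construction of the cutoff: you take $\chi$ to depend only on $(t,x)$ and require that $\supp\chi$ meet the boundary only where $k\cdot n>0$. This is impossible whenever $K$ contains points of the lateral boundary $[0,T]\times\partial\Omega_x\times\Omega_v$. Indeed, at a fixed boundary point $x\in\partial\Omega_x$ (and $t\in(0,T)$) one has $k\cdot n = v\cdot n(x)$, which changes sign as $v$ ranges over the sphere $\Omega_v$; so the $(t,x)$-projection of $K$ is \emph{not} separated from the $(t,x)$-projection of $\Gamma_-$, and $\supp\chi\times\Omega_v$ necessarily contains inflow points. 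Your key inequality
\begin{equation*}
\|w\|_{L^2(K,|k\cdot n|)}^2 \;\leq\; \int_{\partial\Omega_{t,x}\times\Omega_v}\chi\,w^2\,(k\cdot n)\dd s
\end{equation*}
then fails, because the right-hand side acquires uncontrolled negative contributions from $\Gamma_-\cap(\supp\chi\times\Omega_v)$. The only case your construction covers is $K$ contained in the final-time face $\{T\}\times\bar\Omega_x\times\Omega_v$, where $k\cdot n=1$ for all $v$.

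The fix is the one the paper adopts: choose the cutoff as a function of \emph{all} variables, $\eta_K\in C^1(\bar\Omega)$ with $\eta_K\equiv 1$ on $K$ and $\supp\eta_K\cap\Gamma_-=\emptyset$, which is possible since $K$ is compact in $\Gamma_+$ and $\Gamma_+$, $\Gamma_-$ are disjoint. Then $|k\cdot n|=k\cdot n$ on $\supp\eta_K\cap\hat\Gamma$ and the divergence theorem (still only in $(t,x)$, for each fixed $v$) applies to $(\eta_K w)^2$. The price of a $v$-dependent cutoff is that one must control $\|\eta_K w\|_{H^1_{\mathrm{FP}}(\Omega)}$ by $\|w\|_{H^1_{\mathrm{FP}}(\Omega)}$, which requires the product-continuity estimate of \cref{lem:product_continuous} (including the $\nabla_v\eta_K$ terms in the $V$-norm); your by-hand product-rule expansion of the volume terms is the same computation in disguise, but it must be redone with the $v$-dependence included. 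With that repair your argument coincides with the paper's proof.
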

	For the proof we need to estimate the product of $H^1_{\mathrm{FP}}(\Omega)$ functions with different test functions in the following way, where the proof can be found in \cref{sect:app_proofs}.
	\begin{lemma} \label{lem:product_continuous}
		Let $\phi \in C^1(\bar \Omega)$. Then, the mapping $f \mapsto \phi f$ is continuous in $H^1_{\mathrm{FP}}(\Omega)$ with the estimate
		\begin{equation*}
		\|\phi f\|_{H^1_{\mathrm{FP}}(\Omega)}
		\leq 
		C \|\phi\|_{C^1(\Omega)} \|f\|_{H^1_{\mathrm{FP}}(\Omega)}.
		\end{equation*}
	\end{lemma}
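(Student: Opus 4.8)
The plan is to check the two contributions to the $H^1_{\mathrm{FP}}(\Omega)$-norm separately and combine them, using the density result \cref{prop:density} to turn the formal Leibniz rule into a rigorous statement.

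\emph{Step 1 (the $L^2(\Omega_{t,x};V)$-part and a dual multiplication bound).} For a.e.\ $(t,x)$ one has on $\Omega_v$ the pointwise estimates $\|\phi(t,x,\cdot)f(t,x,\cdot)\|_{L^2(\Omega_v)}\le\|\phi\|_{C^0(\bar\Omega)}\|f(t,x,\cdot)\|_{L^2(\Omega_v)}$ and, by the product rule for the tangential gradient $\nabla_v$, $\|\nabla_v(\phi f)(t,x,\cdot)\|_{L^2(\Omega_v)}\le C\|\phi\|_{C^1(\Omega)}\big(\|f(t,x,\cdot)\|_{L^2(\Omega_v)}+\|\nabla_v f(t,x,\cdot)\|_{L^2(\Omega_v)}\big)$, whence $\|\phi(t,x,\cdot)f(t,x,\cdot)\|_V\le C\|\phi\|_{C^1(\Omega)}\|f(t,x,\cdot)\|_V$; squaring and integrating over $\Omega_{t,x}$ gives the bound for $\|\phi f\|_{L^2(\Omega_{t,x};V)}$. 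The same argument shows that, for fixed $\psi\in C^1(\Omega_v)$, multiplication $\varphi\mapsto\psi\varphi$ is bounded on $V$ with operator norm $\le C\|\psi\|_{C^1(\Omega_v)}$; by duality, with $\langle\psi T,\varphi\rangle_{V',V}:=\langle T,\psi\varphi\rangle_{V',V}$, it is bounded on $V'$ with the same bound, i.e.\ $\|\psi T\|_{V'}\le C\|\psi\|_{C^1(\Omega_v)}\|T\|_{V'}$.

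\emph{Step 2 (the transport part).} For smooth $f$ the Leibniz rule gives $k\cdot\nabla_{t,x}(\phi f)=(k\cdot\nabla_{t,x}\phi)\,f+\phi\,(k\cdot\nabla_{t,x}f)$. The coefficient $k\cdot\nabla_{t,x}\phi=\partial_t\phi+v\cdot\nabla_x\phi$ is bounded on $\bar\Omega$ by $C\|\phi\|_{C^1(\Omega)}$ (using $|v|=1$), so by Step 1 the first summand lies in $L^2(\Omega_{t,x};V)\hookrightarrow L^2(\Omega_{t,x};V')$ with norm $\le C\|\phi\|_{C^1(\Omega)}\|f\|_{L^2(\Omega_{t,x};V)}$. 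For the second summand, for a.e.\ $(t,x)$ the factor $\phi(t,x,\cdot)\in C^1(\Omega_v)$ acts on $(k\cdot\nabla_{t,x}f)(t,x,\cdot)\in V'$, and the dual multiplication bound of Step 1 yields the pointwise estimate $\|\phi(t,x,\cdot)(k\cdot\nabla_{t,x}f)(t,x,\cdot)\|_{V'}\le C\|\phi\|_{C^1(\Omega)}\|(k\cdot\nabla_{t,x}f)(t,x,\cdot)\|_{V'}$; since $k\cdot\nabla_{t,x}f\in L^2(\Omega_{t,x};V')$ by assumption, integration gives $\|\phi(k\cdot\nabla_{t,x}f)\|_{L^2(\Omega_{t,x};V')}\le C\|\phi\|_{C^1(\Omega)}\|k\cdot\nabla_{t,x}f\|_{L^2(\Omega_{t,x};V')}$ once one has checked that $(t,x)\mapsto\phi(t,x,\cdot)(k\cdot\nabla_{t,x}f)(t,x,\cdot)$ is a measurable $V'$-valued map (approximate $f$ by simple, resp.\ smooth, functions).

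\emph{Step 3 (density and conclusion).} For general $f\in H^1_{\mathrm{FP}}(\Omega)$ choose, by \cref{prop:density}, $f_n\in C^\infty(\bar\Omega_{t,x}\times\Omega_v)$ with $f_n\to f$ in $H^1_{\mathrm{FP}}(\Omega)$. By Steps 1--2, $\phi f_n\to\phi f$ in $L^2(\Omega_{t,x};V)$ and $k\cdot\nabla_{t,x}(\phi f_n)=(k\cdot\nabla_{t,x}\phi)f_n+\phi(k\cdot\nabla_{t,x}f_n)\to(k\cdot\nabla_{t,x}\phi)f+\phi(k\cdot\nabla_{t,x}f)$ in $L^2(\Omega_{t,x};V')$. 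Since the distributional operator $k\cdot\nabla_{t,x}$ is closed from $L^2(\Omega_{t,x};V)$ to $L^2(\Omega_{t,x};V')$, it follows that $\phi f\in H^1_{\mathrm{FP}}(\Omega)$ with $k\cdot\nabla_{t,x}(\phi f)=(k\cdot\nabla_{t,x}\phi)f+\phi(k\cdot\nabla_{t,x}f)$, and passing to the limit in $\|\phi f_n\|_{H^1_{\mathrm{FP}}(\Omega)}\le C\|\phi\|_{C^1(\Omega)}\|f_n\|_{H^1_{\mathrm{FP}}(\Omega)}$ yields the assertion. The main obstacle is precisely the second Leibniz term $\phi\,(k\cdot\nabla_{t,x}f)$: one must make sense of multiplying the $V'$-valued object $k\cdot\nabla_{t,x}f$ by the merely $C^1$, $v$-dependent function $\phi$, identify this product with the one arising distributionally in $k\cdot\nabla_{t,x}(\phi f)$, and secure the measurability needed to integrate the pointwise $V'$-bound; the duality estimate of Step 1 provides the bound and the density argument of Step 3 provides the identification.
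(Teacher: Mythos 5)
Your proof is correct and follows essentially the same route as the paper's: both split $k\cdot\nabla_{t,x}(\phi f)$ via the Leibniz rule, handle the term $\phi\,(k\cdot\nabla_{t,x}f)$ by moving $\phi$ onto the test function (the paper does this globally in the $\cX'$--$\cX$ pairing, you do it pointwise in $(t,x)$ at the $V'$--$V$ level and then integrate), and extend from smooth $f$ by the density of \cref{prop:density}. Your Step 3 merely makes explicit the measurability and closedness details that the paper's appendix treats tersely.
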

	
	\begin{proof}[Proof of \cref{lem:trace}]
		We use ideas of the proof of a similar result for transport equations{,} e.g.{,}  in \cite[Chap.\ XXI, Thm.\ 1, p.\ 220]{DauLio1993_Vol6}. Analogous results for spaces similar to $H^1_{\mathrm{FP}}(\Omega)$ are also given in \cite[Proofs of Lemmas 4.3, 7.6]{AM2019}.
		
		Given a compact set $K \subset \Gamma_+$, let $\eta_K \in C^1(\bar\Omega)$ with $\eta_K = 1$ on $K$ and $\supp \eta_K \cap \Gamma_- = \emptyset$.  
		We then obtain by integrating by parts for $w \in C^\infty(\bar \Omega)$
		\begin{align*}
		\int_K w^2 |{k} \cdot n| \dd s
		&= \int_K (\eta_K w)^2 |{k} \cdot n| \dd s
		\leq \int_{{\hat\Gamma}} (\eta_K w)^2 |{k} \cdot n |  \dd s \\
		& \overset{(*)}{=} \int_{{\hat\Gamma}} (\eta_K w)^2 {k} \cdot n  \dd s 
		= 2 \int_{\Omega} \eta_K w {k} \cdot \nabla_{t,x} (\eta_K w) \dd((t,x),v) \\
		&
		\leq 2\|\eta_K w\|_{L^2(\Omega_{t,x},V)} \|{k} \cdot \nabla_{t,x} (\eta_K w)\|_{L^2(\Omega_{t,x},V')} \\
		&
		\leq 2\|\eta_K w\|_{H^1_{\mathrm{FP}}(\Omega)}^2 
		\!\!\overset{\text{\cref{lem:product_continuous}}}{\leq}\!\!
		C \|\eta_K\|_{C^1(\Omega)}^2 \|w \|_{H^1_{\mathrm{FP}}(\Omega)}^2.
		\end{align*}
		We thus have continuity of the mapping $w \mapsto w|_{K}$ for all $w \in C^\infty(\bar \Omega)$, and by density (\cref{prop:density}) the mapping extends to a continuous operator $H^1_{\mathrm{FP}}(\Omega) \to L^2(K,|{k} \cdot n |)$. For $K \subset \Gamma_-$ the claim can be shown analogously using $|{k} \cdot n | = - {k} \cdot n$ on $\supp \eta_K$ in $(*)$.
	\end{proof}
	This result ensures that $H^1_{\mathrm{FP}}(\Omega)$ functions have a trace on the non\-/characteristic boundary\footnote{The non-characteristic boundary is the part of the boundary where $|{k} \cdot n | \neq 0$.} $\Gamma_+ \cup \Gamma_-$. 
	However, from the local existence of traces we cannot directly deduce that these generally lie in global trace spaces as e.g.\ $L^2(\partial\Omega, |{k} \cdot n|)$.
	
	{We now define}
	\begin{equation} \label{def:HkinG+-}
	H^1_{\mathrm{FP},{\Gamma_\pm}}(\Omega) := \clos_{\|\cdot\|_{H^1_{\mathrm{FP}}(\Omega)}} \{f \in C^\infty(\bar \Omega) \,:\, f \equiv 0 \text{ on } {\Gamma_\pm}\}.
	\end{equation}
	{ To avoid boundary integrals on the outflow domain in the variational formulation, we will use $H^1_{\mathrm{FP},{\Gamma_+}}(\Omega)$ } as the test space for our variational formulation.} 
With the restriction of functions in $H^1_{\mathrm{FP},\Gamma_+}(\Omega)$ on the outflow boundary and the definition through the closure, we can show that these functions have a trace in $L^2(\Gamma_-, |{k} \cdot n|)$:
\begin{proposition} \label{prop:trace_PI} 
	There exists a linear continuous mapping
	$
	\gamma_- : H^1_{\mathrm{FP},\Gamma_+}(\Omega) \to L^2(\Gamma_-, |{k} \cdot n|)
	$
	such that
	\begin{equation*}
	\|\gamma_-(w)\|_{L^2(\Gamma_-, |{k} \cdot n|)} \leq C \|w\|_{H^1_{\mathrm{FP}}(\Omega)} \quad \forall w \in H^1_{\mathrm{FP},\Gamma_+}(\Omega).
	\end{equation*}
	Furthermore, the integration by parts formula
	\begin{equation*}
	\int_{\Omega_{t,x}} \la {k} \cdot \nabla_{t,x} w , w \ra_{V'\!, V} \dd (t,x) = \tfrac{1}{2} \int_{\Gamma_-} w^2   {k} \cdot n  \dd s 
	\end{equation*}
	holds for all $w \in H^1_{\mathrm{FP},\Gamma_+}(\Omega)$.
\end{proposition}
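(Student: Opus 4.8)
The plan is to prove the trace estimate on the dense smooth subspace first and then extend by continuity, exploiting that $H^1_{\mathrm{FP},\Gamma_+}(\Omega)$ is by \eqref{def:HkinG+-} precisely the $\|\cdot\|_{H^1_{\mathrm{FP}}(\Omega)}$-closure of the smooth functions vanishing on $\Gamma_+$. So let $w \in C^\infty(\bar\Omega)$ with $w \equiv 0$ on $\Gamma_+$. Since ${k}$ is constant in $(t,x)$ we have $\divergence_{t,x} {k} = 0$, so applying the divergence theorem on the Lipschitz domain $\Omega_{t,x}$ to the vector field $(t,x) \mapsto {k}\, w^2$ and integrating over $\Omega_v$ gives
\[
\int_\Omega {k} \cdot \nabla_{t,x}(w^2) \dd((t,x),v) = \int_{\hat\Gamma} w^2 \, ({k} \cdot n) \dd s .
\]
On $\hat\Gamma \setminus (\Gamma_+ \cup \Gamma_-)$ the factor ${k}\cdot n$ vanishes, on $\Gamma_+$ the function $w$ vanishes, and on $\Gamma_-$ one has ${k}\cdot n = -|{k}\cdot n|$; using $w\,({k}\cdot\nabla_{t,x}w) = \tfrac12 {k}\cdot\nabla_{t,x}(w^2)$ and that the $V'$--$V$ pairing reduces to the $L^2(\Omega_v)$ inner product for smooth arguments, this yields
\[
\|w\|_{L^2(\Gamma_-, |{k}\cdot n|)}^2 = -\int_{\Gamma_-} w^2\,({k}\cdot n) \dd s = -2 \int_{\Omega_{t,x}} \la {k}\cdot\nabla_{t,x} w, w\ra_{V'\!,V} \dd (t,x) .
\]
Cauchy--Schwarz in $L^2(\Omega_{t,x};V')$ against $L^2(\Omega_{t,x};V)$ followed by Young's inequality bounds the right-hand side by $\|w\|_{H^1_{\mathrm{FP}}(\Omega)}^2$, so $w \mapsto w|_{\Gamma_-}$ is a bounded linear map from $\{f \in C^\infty(\bar\Omega) : f \equiv 0 \text{ on } \Gamma_+\}$, normed by $\|\cdot\|_{H^1_{\mathrm{FP}}(\Omega)}$, into $L^2(\Gamma_-,|{k}\cdot n|)$, with constant $C = 1$.

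Since this smooth subspace is dense in $H^1_{\mathrm{FP},\Gamma_+}(\Omega)$, the map extends uniquely to a continuous linear operator $\gamma_- : H^1_{\mathrm{FP},\Gamma_+}(\Omega) \to L^2(\Gamma_-, |{k}\cdot n|)$ obeying the asserted bound. For the integration-by-parts formula, observe that $\int_{\Omega_{t,x}} \la {k}\cdot\nabla_{t,x}w, w\ra_{V'\!,V}\dd(t,x) = \tfrac12\int_{\Gamma_-} w^2\,({k}\cdot n)\dd s$ is exactly what the computation above establishes for smooth $w$ vanishing on $\Gamma_+$. Both sides are continuous in $w$ with respect to $\|\cdot\|_{H^1_{\mathrm{FP}}(\Omega)}$ — the left-hand side because $w \mapsto {k}\cdot\nabla_{t,x}w$ and $w \mapsto w$ are continuous into $L^2(\Omega_{t,x};V')$ and $L^2(\Omega_{t,x};V)$ and the pairing is a bounded bilinear form, the right-hand side because $w \mapsto \gamma_-(w)$ is continuous into $L^2(\Gamma_-,|{k}\cdot n|)$ and $u \mapsto \int_{\Gamma_-} u^2\,({k}\cdot n)\dd s = -\|u\|_{L^2(\Gamma_-,|{k}\cdot n|)}^2$ is continuous there — so passing to the limit proves the identity for all $w \in H^1_{\mathrm{FP},\Gamma_+}(\Omega)$, with $w^2$ on the right read as $(\gamma_-(w))^2$.

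The only genuinely delicate point is the boundary identity $\int_\Omega {k}\cdot\nabla_{t,x}(w^2)\dd((t,x),v) = \int_{\hat\Gamma} w^2\,({k}\cdot n)\dd s$: it needs the divergence theorem on the merely piecewise-$C^1$, globally Lipschitz domain $\Omega_x$ (hence on $\Omega_{t,x}$) together with the identification of $\hat\Gamma$ with $\partial\Omega$ up to a set of surface measure zero, which is legitimate since $\Omega_v = S^{d-1}$ is boundaryless and the ``edge'' $\{0,T\}\times\partial\Omega_x\times\Omega_v$ is negligible in $\partial\Omega$. Everything else is the routine ``estimate on a dense smooth subspace, then extend'' argument; in fact the key estimate repeats almost verbatim the integration-by-parts computation in the proof of \cref{lem:trace}, the new feature being only that the outflow term over $\Gamma_+$ now drops out thanks to $w \equiv 0$ there, which is precisely what upgrades the merely local traces of \cref{lem:trace} to a global trace in $L^2(\Gamma_-,|{k}\cdot n|)$.
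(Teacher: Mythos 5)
Your proof is correct and follows essentially the same route as the paper's: establish the boundary identity for smooth $w$ vanishing on $\Gamma_+$ via integration by parts (your divergence-theorem formulation of $\int_\Omega k\cdot\nabla_{t,x}(w^2)$ is the same computation), bound the resulting $\Gamma_-$ term by Cauchy--Schwarz, and extend both the trace bound and the identity by density, which is built into the definition \eqref{def:HkinG+-}. The only cosmetic differences are that you spell out the continuity of both sides of the integration-by-parts formula in the limit passage and obtain the marginally sharper constant $C=1$ via Young's inequality where the paper settles for $\sqrt{2}$.
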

\begin{proof}
	The proof is similar to the respective result for transport equations e.g. in \cite[Prop.\ 2.4]{BSU2019}, see also \cite[sect.\ 4]{AM2019}. Let $w \in C^\infty(\bar\Omega)$ with $w \equiv 0$ on $\Gamma_+$. Performing integration by parts we obtain
	\begin{align*}
	\int_{\Omega}  w {k} \cdot \nabla_{t,x} w \dd((t,x),v)
	&= - \int_{\Omega} \nabla_{t,x} w \cdot {k} w \dd ((t,x),v)
	+ \int_{\Gamma_-} w^2  \underbrace{{k} \cdot n}_{<0}  \dd s,
	\end{align*}
	and thus
	\begin{align*}
	\|w\|_{L^2(\Gamma_-, |{k} \cdot n|)}^2 
	&= \int_{\Gamma_-} w^2  \left | {k} \cdot n \right | \dd s 
	= 2  \int_{\Omega} (- {k} \cdot \nabla_{t,x} w) w \dd ((t,x),v)  \\
	& \leq 2 \|- {k} \cdot \nabla_{t,x} w \|_{L^2(\Omega_{t,x}; V')}  \|w\|_{L^2(\Omega_{t,x};V)}
	\leq 2 \|w\|_{H^1_{\mathrm{FP}}(\Omega)}^2.
	\end{align*}
	By density (due to the definition of $H^1_{\mathrm{FP},\Gamma_+}(\Omega)$){,} the integration by parts formula and the bound for $\|w\|_{L^2(\Gamma_-,|{k} \cdot n|)}$ hold for all $w \in H^1_{\mathrm{FP},\Gamma_+}(\Omega)$.
\end{proof}

\begin{remark} \label{remark:H1kinGamma-}
	Similarly, it can be shown that the space $H^1_{\mathrm{FP},\Gamma_-}(\Omega)$ admits a continuous trace operator $\gamma_+ : H^1_{\mathrm{FP},\Gamma_-}(\Omega) \to L^2(\Gamma_+, |{k} \cdot n|)$.
\end{remark}

To later show the uniqueness of the weak solution in \cref{sect:var_form}, we also need to verify the existence of a global trace and the integration by parts formula for certain functions in $H^1_{\mathrm{FP}}(\Omega)$ with vanishing trace on $\Gamma_-$, but not necessarily in $H^1_{\mathrm{FP},\Gamma_-}(\Omega)$.
This is established for spaces where the advective or kinetic terms lie in $L^2(\Omega)$ (see{,} e.g.{,} \cite[Thm.\ 2.2, Prop.\ 2.5]{Bar70}), \cite[Chap. XXI, Remark 3]{DauLio1993_Vol6}). 
Similar or even stronger results for respective functions in  $H^1_{\mathrm{FP}}(\Omega)$ are claimed to be proven in \cite{AM2019,BalPal2020,Car1998}, however, we believe the arguments to be incomplete, for more details see {the supplementary material}.

Since we were not able to prove the existence of a global trace for $H^1_{\mathrm{FP}}(\Omega)$ functions with vanishing trace on the inflow or the outflow boundary, we will formulate the exact result needed for uniqueness of the weak solution as an assumption in \cref{sect:var_form}.

\section{Variational formulation} \label{sect:var_form}
In this section, we develop a variational formulation for \cref{strong_form_with_q} and show its well-posedness. 

Let ${a} : \Omega_{t,x} \times V \times V \to \R$ be a potentially $(x,t)$-dependent bilinear form defined on the velocity space $V$. Moreover, let ${a}$ satisfy the following assumptions:
\begin{align}
&\text{the map } (t,x) \mapsto {a}((t,x); \phi,\psi) \text{ is measurable on } \Omega_{t,x} \text{ for all } \phi,\psi \in V, \label{ass_a_meas}\\
&{a}((t,x); \cdot, \cdot) \text{ is bilinear for a.e. } (t,x) \in \Omega_{t,x}, \label{ass_a_bil}\\
&{a}((t,x); \phi, \psi) \leq {c_a} \|\phi\|_V \|\psi\|_V \text{ with } {c_a} < \infty \text{ for all } \phi,\psi \in V, \text{ a.e. } (x,t) \in \Omega_{t,x}, \label{ass_a_cont}\\
&{a}((t,x); \phi, \phi) + {\lambda_a} \|\phi\|_{L^2(\Omega_v)}^2 \geq {\alpha_a} \|\phi\|_V^2 \text{ with } {\lambda_a} \in \R, {\alpha_a} > 0 \label{ass_a_gard}\\
&\hspace{6.2cm}\text{ for all } \phi \in V, \text{ a.e. } (x,t) \in \Omega_{t,x} . \nonumber 
\end{align}
Note that ${c_a}, {\lambda_a},$ and ${\alpha_a}$ are assumed to be independent of $(x,t)$.

\begin{example}
	For the strong form of the Fokker-Planck equation \cref{strong_form_with_q}, ${a}$ is given for all $\phi, \psi \in V, \text{ a.e.\ } x \in \Omega_x$ by
	\begin{align*}
	{a}(x;\phi,\psi) &= \left (\nabla_{v}\left (q(x,v)^{-1} \phi(v)\right ) ,\nabla_{v} \psi(v) \right )_{L^2(\Omega_v)} \\
	&= \left (q(x,v)^{-1} \nabla_{v} \phi(v) ,\nabla_{v} \psi(v) \right )_{L^2(\Omega_v)} 
	+ \left (\nabla_{v} q(x,v)^{-1}  \phi(v) ,\nabla_{v} \psi(v) \right )_{L^2(\Omega_v)},
	\end{align*}	
	where $\nabla_{v}$ is the tangential gradient on $\Omega_v$, see{,} e.g.{,} \cite{DE2013} for a formal definition. 
	If $q^{-1} \in L^\infty(\Omega_x \times \Omega_v)$ with $\nabla_{v} q^{-1} \in L^\infty(\Omega_x \times \Omega_v)$ and $q^{-1}(x,v) \geq l_q > 0$ for a.e.\ $(x,v)$, then ${a}$  fulfills the conditions \cref{ass_a_meas,ass_a_bil,ass_a_cont,ass_a_gard}, for instance, with ${c_a} = \|q^{-1}\|_{L^\infty} + \|\nabla_{v}q^{-1}\|_{L^\infty}$,
	${\alpha_a} = \tfrac{1}{2} l_q$, and 
	${\lambda_a} = \|\nabla_{v}q^{-1}\|_{L^\infty}^2 / (2l_q) + \tfrac{1}{2} l_q$.
	Depending on $q$, other estimates might be better, e.g.{,} for $q=q(x)$ and thus $\nabla_{v}q = 0$ we can get ${\alpha_a} = {\lambda_a} = l_q$.
\end{example}

Recalling the function spaces introduced in \cref{eq:L2OmegaxtV,def:HkinG+-}, we define the {space-time-velocity} trial and test spaces as
\begin{equation} \label{def_X_Y}
\cX := L^2(\Omega_{t,x},V), \qquad 
\cY := H^1_{\mathrm{FP},\Gamma_+}(\Omega).
\end{equation}
{ with squared norms (cf. \eqref{eq:L2OmegaxtV}, \eqref{eq:h1kin_norm})
	\begin{align}
	\|w\|_{\cX}^2 & = \int_{\Omega_{t,x}} \|w(t,x)\|_V^2 \dd (t,x) \label{eq:norm_cX},\\
	\|p\|_{\cY}^2 &= 
	\|p\|_{\cX}^2 + \|{k} \cdot \nabla_{t,x}  p \|_{\cX'}^2. \label{eq:norm_cY}
	\end{align}
}
We then define the full bilinear form $b: \cX \times \cY \to \R$ for  $w \in \cX, p \in \cY$ by
\begin{equation} \label{eq:b_bil_form}
b(w,p) := \int_{\Omega_{t,x}} \la w(t,x), -{k(t,x)} \cdot \nabla_{t,x} p(t,x) \ra_{V, V'}
+  a((t,x); w(t,x),p(t,x)) \dd (t,x).
\end{equation}
The functional $f: \cY \to \R$ containing the boundary condition \mbox{$g \in L^2(\Gamma_-, |{k} \cdot n|)$} is given as
\begin{equation*}
f(p) :=  \int_{\Gamma_-} g p
\left | {k}
\cdot n \right | \dd ((t,x),v) \quad \forall \, p \in \cY,
\end{equation*}
which is well-defined due to \cref{prop:trace_PI}, and we thus have  $f \in \cY'$. 

We call $u \in \cX$ a weak solution of \cref{strong_form_with_q}, if
\begin{equation} \label{eq:var_form}
b(u,p) = f(p) \quad \forall \, p \in \cY.
\end{equation}
In the following, we examine the well-posedness of the variational formulation, using the Banach-Ne\v{c}as-Babu\v{s}ka (or inf-sup) Theorem (see e.g.\ \cite[Thm.\ 2.6]{EG2004}). We first prove existence of a weak solution in \cref{subsec:existence}. Then, in \cref{subsec:uniqueness} we also show uniqueness of the weak solution under an additional assumption on the trace of certain $H^1_{\mathrm{FP}}(\Omega)$-functions.

\subsection{Existence of a weak solution} \label{subsec:existence}

We show the existence of a weak solution $u$ to \eqref{eq:var_form} by verifying a dual inf-sup condition. 
To that end, we construct stable pairs of trial and test space functions such that the application of the bilinear form to the function pairs can be estimated from below by the respective norms of the functions.
In these pairs, the trial space functions are derived from the test space functions by the application of the kinetic transport operator and the inverse elliptic velocity operator.
We thus generalize similar proofs for parabolic equations \cite{EG2004,SS2009}{, where} a time derivative {was used} instead of the kinetic transport operator{,} and for transport equations{, where} only an application of the transport operator {was used} \cite{BSU2019,DHSW2012,DemGop11}.

\begin{theorem}\label{prop:inf_sup}
	The bilinear form $b$ satisfies the dual inf-sup condition
	\begin{equation*}
	\inf_{\substack{p \in \cY\\{p \neq 0}}} \sup_{\substack{w \in \cX\\{w \neq 0}}} \frac{b(w,p)}{\|w\|_\cX \|p\|_\cY} \geq \beta
	\end{equation*}
	with an inf-sup constant
	\begin{align}
	\beta &\geq \frac{{\alpha_a}}{\sqrt{2}\max\{1,{c_a}\}}, 
	&& \text{if } {\lambda_a} \leq 0, \label{eq:inf-sup-coercive}\\
	\beta &\geq \frac{{\alpha_a}}{\sqrt{2}\max\{1, {c_a}+{\lambda_a}\}} \frac{e^{-{\lambda_a} T}}{\sqrt{\max\{ 1+2{\lambda_a^2}, 2 \}}}, \hspace{-1.5cm}
	&& \text{if } {\lambda_a} > 0. \label{eq:inf-sup-lambda}
	\end{align}
	Consequently, the variational formulation \eqref{eq:var_form} has at least one weak solution $u \in \cX$.	
\end{theorem}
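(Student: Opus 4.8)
The plan is to verify the dual inf-sup condition and then read off existence from the Banach--Ne\v{c}as--Babu\v{s}ka framework (see e.g.\ \cite[Thm.\ 2.6]{EG2004}): writing $B\colon\cX\to\cY'$ for the operator induced by $b$, the asserted inequality is exactly the statement that the adjoint $B^{*}\colon\cY\to\cX'$ is bounded below, hence $B$ has closed range and is surjective, so $b(u,\cdot)=f$ is solvable for the given $f\in\cY'$.

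For the inf-sup estimate I would, following the parabolic template and mirroring the discrete construction of \cref{sect:discretization}, attach to each $p\in\cY$ the explicit partner $w_p:=\mu\,p-\cL^{-1}(k\cdot\nabla_{t,x}p)$, where $\cL\colon V\to V'$ is the fixed, $(t,x)$-independent isomorphism induced by the inner product of $V=H^{1}(\Omega_v)$ (morally the shifted velocity Laplace--Beltrami operator), applied pointwise in $(t,x)$, and $\mu>0$ is a scaling parameter to be fixed. This $w_p$ lies in $\cX$, since $p\in\cY\subset\cX$ and $k\cdot\nabla_{t,x}p\in L^{2}(\Omega_{t,x};V')=\cX'$, on which $\cL^{-1}$ is an isometry into $\cX$; it is the direct analogue of $w_p=p-(\Delta_v)^{-1}(\partial_t p+v\cdot\nabla_x p)$ from the proofs for parabolic and transport equations, and $\cL^{-1}(k\cdot\nabla_{t,x}p)$ is, up to the lower-order part of $a$, the maximiser of $w\mapsto b(w,p)/\|w\|_\cX$ over $\cX$.

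In the G\aa rding-coercive regime $\lambda_a\le 0$ I would expand $b(w_p,p)$ into four contributions. The transport term tested against $-\cL^{-1}(k\cdot\nabla_{t,x}p)$ reproduces $+\|k\cdot\nabla_{t,x}p\|_{\cX'}^{2}$ by the definition of $\cL$; the transport term tested against $\mu p$ equals $\tfrac{\mu}{2}\|\gamma_-(p)\|_{L^{2}(\Gamma_-,|k\cdot n|)}^{2}\ge 0$ by the integration-by-parts identity of \cref{prop:trace_PI} (legitimate because $p\in\cY=H^{1}_{\mathrm{FP},\Gamma_+}(\Omega)$); the bilinear form $a$ tested against $\mu p$ is $\ge\mu\alpha_a\|p\|_\cX^{2}$, since for $\lambda_a\le 0$ the G\aa rding inequality \cref{ass_a_gard} becomes coercivity; and $a$ tested against $-\cL^{-1}(k\cdot\nabla_{t,x}p)$ is, in modulus, at most $c_a\|k\cdot\nabla_{t,x}p\|_{\cX'}\|p\|_\cX$ by the continuity bound \cref{ass_a_cont} and the isometry of $\cL^{-1}$. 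With $X:=\|p\|_\cX$ and $Z:=\|k\cdot\nabla_{t,x}p\|_{\cX'}$ this gives $b(w_p,p)\ge\mu\alpha_a X^{2}+Z^{2}-c_a XZ$ and $\|p\|_\cY^{2}=X^{2}+Z^{2}$, while expanding $\|w_p\|_\cX^{2}$ with the same identity yields $\|w_p\|_\cX^{2}=\mu^{2}X^{2}+Z^{2}+\mu\|\gamma_-(p)\|^{2}\le(\mu X+Z)^{2}$. Controlling the indefinite term $c_a XZ$ by Young's inequality and choosing $\mu$ so that the surviving $X^{2}$- and $Z^{2}$-coefficients are comparable to $X^{2}+Z^{2}$ while $\|w_p\|_\cX\lesssim\|p\|_\cY$, one arrives at $b(w_p,p)\ge\beta\|w_p\|_\cX\|p\|_\cY$ with $\beta$ as in \cref{eq:inf-sup-coercive}. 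For $\lambda_a>0$ I would first kill the lower-order defect by the time-exponential substitution $u(t,\cdot)\mapsto e^{\lambda_a t}u(t,\cdot)$ (equivalently, an $e^{2\lambda_a t}$-weighted variant of $w_p$): since $e^{\lambda_a t}$ is constant in $v$, this turns $a$ into $a(\,\cdot\,;\cdot,\cdot)+\lambda_a(\cdot,\cdot)_{L^{2}(\Omega_v)}$, coercive with constant $\alpha_a$ and continuous with constant $c_a+\lambda_a$, at the cost of a new $\lambda_a$-term in the transport operator and of norm-equivalence constants confined to $[e^{-\lambda_a T},1]$; rerunning the coercive argument and tracking these gives the extra factor $e^{-\lambda_a T}/\sqrt{\max\{1+2\lambda_a^{2},2\}}$ in \cref{eq:inf-sup-lambda}.

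The step I expect to be the real work is the sharp constant bookkeeping: the indefinite piece $\int_{\Omega_{t,x}}a((t,x);\cL^{-1}(k\cdot\nabla_{t,x}p),p)$ must be absorbed without giving up either the coercivity gain $\mu\alpha_a\|p\|_\cX^{2}$ or the transport gain $\|k\cdot\nabla_{t,x}p\|_{\cX'}^{2}$, which is what fixes the admissible range of $\mu$ (and may require more care than a crude Young split -- keeping the boundary term $\|\gamma_-(p)\|^{2}$, or working with the exact maximiser $\cL^{-1}(\mathcal A^{*}p-k\cdot\nabla_{t,x}p)$, $\mathcal A^{*}(t,x)\colon V\to V'$ the adjoint of $\phi\mapsto a((t,x);\phi,\cdot)$, instead of $w_p$); in the case $\lambda_a>0$ the weights -- including the new $\partial_t$-contribution -- have to be carried through precisely to hit exactly the stated factors. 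A recurring technical caveat is that every integration by parts and $V'$-$V$ pairing above is licensed only by the density and trace results of \cref{sect:function_spaces} (\cref{prop:density,prop:trace_PI}), so one must stay inside $\cY=H^{1}_{\mathrm{FP},\Gamma_+}(\Omega)$ throughout.
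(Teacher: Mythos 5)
Your overall architecture is the paper's: build a stable pair $(w_p,p)$ by applying the kinetic transport operator and an inverse elliptic velocity operator to $p$, bound $b(w_p,p)$ below, and handle $\lambda_a>0$ by the temporal transformation (whose bookkeeping you describe correctly, matching the paper's constants $e^{-\lambda_a T}$ and $\max\{1+2\lambda_a^2,2\}^{-1/2}$). The gap is in the coercive case, and it is exactly the point you flagged as ``the real work'': with your choice $w_p=\mu p-\cL^{-1}(k\cdot\nabla_{t,x}p)$, where $\cL$ is the \emph{fixed} Riesz isomorphism of $V$, the cross term $\int_{\Omega_{t,x}} a\bigl((t,x);\cL^{-1}(k\cdot\nabla_{t,x}p),p\bigr)$ is genuinely indefinite and only bounded by $c_a XZ$. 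Absorbing it by Young forces $\mu\alpha_a\gtrsim c_a^2$, hence $\mu\gtrsim c_a^2/\alpha_a$, and since $\|w_p\|_\cX\sim\mu X$ the resulting bound scales like $\alpha_a/c_a^2$ — the parabolic-type constant of \cite{SS2009} that the paper's remark explicitly improves upon — and not the claimed $\alpha_a/(\sqrt2\max\{1,c_a\})$ of \eqref{eq:inf-sup-coercive}. So the route as written does not reach the stated constant.

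The paper's fix is the one you gestured at but did not carry out: take $\mu=1$ and replace $\cL^{-1}$ by the inverse of the operator induced by $a$ itself, i.e.\ define $z_p\in\cX$ by Lax--Milgram from $\int_{\Omega_{t,x}}a(z_p,w)\dd(t,x)=\int_{\Omega_{t,x}}\la -k\cdot\nabla_{t,x}p,w\ra_{V'\!,V}\dd(t,x)$ for all $w\in\cX$ (well-posed since \eqref{ass_a_gard} with $\lambda_a\le0$ gives coercivity), and set $w_p=p+z_p$. Then the two cross terms telescope with no Young step: testing the defining relation with $w=z_p$ turns $\la z_p,-k\cdot\nabla_{t,x}p\ra$ into $a(z_p,z_p)\ge\alpha_a\|z_p\|_V^2$, and testing with $w=p$ turns $a(z_p,p)$ into $\la -k\cdot\nabla_{t,x}p,p\ra$, which combines with the remaining transport term into the nonnegative boundary integral of \cref{prop:trace_PI}. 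This yields $b(w_p,p)\ge\alpha_a(\|p\|_\cX^2+\|z_p\|_\cX^2)$ exactly, and together with $\|k\cdot\nabla_{t,x}p\|_{\cX'}\le c_a\|z_p\|_\cX$ (which replaces your isometry claim; with a general $a$ one only gets this one-sided continuity bound from \eqref{ass_a_cont}) the constant $\alpha_a/(\sqrt2\max\{1,c_a\})$ follows by elementary estimates. In short: the operator you invert must be the problem-dependent $A_v(t,x)$, not the fixed velocity Riesz map, or the cross term costs you a full factor of $c_a/\alpha_a$ in the constant.
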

\begin{remark}
	The estimates for $\beta$ are not worse than estimates for space-time variational formulations for parabolic equations {from \cite{SS2009}}. In fact, for {${\lambda_a} \leq 0$} and assuming ${\alpha_a} \leq 1$ and ${c_a} \geq 1${,} the estimate in \cite[(A.6)]{SS2009} roughly translates\footnote{More precisely, using the notation of this paper, the complete estimate in \cite[(A.6)]{SS2009} reads $\beta_{parab} \geq \min(\alpha_a/c_a^2, \alpha_a)\, / \, (2 \max(\alpha_a^{-2},1) + M_e^2)^{1/2}$, where $M_e$ is an additional positive constant that appears due to a different boundary treatment and that we can leave out here.} to $\beta_{parab} \geq {\alpha_a^2} / (\sqrt{2} {c_a^2})$, while we have $\beta \geq {\alpha_a} / {\sqrt{2}{c_a}}$. The exponential dependence on the final time $T$ for the non-coercive case is the same for both types of equations. 
\end{remark} 
\begin{proof}[Proof of \cref{prop:inf_sup}]
	We start with the case of ${a}$ being coercive, i.e., ${\lambda_a} \leq 0$; the non-coercive case will be treated afterwards via a temporal transformation.

	To show the inf-sup condition we combine ideas from well-posedness results for parabolic equations as e.g.\ in \cite{EG2004, SS2009} and for transport equations as{,} e.g.{,} in \cite{BSU2019}. 
	To that end, we take $0 \neq p \in \cY$ arbitrary, but fixed. We want to construct a suitable $w_p \in \cX$ and show $b(w_p, p) \geq  \beta \|w_p\|_\cX \|p\|_\cY$ for a constant $\beta$ independent of $p$, which makes $\beta$ a lower bound for the inf-sup constant.
	
	Since $p \in \cY$, we have ${r_p} := -{k} \cdot \nabla_{t,x} p \in L^2(\Omega_{t,x};V') = \cX'$.
	Similar to \cite[pp.\ 235]{LioMag1972}, we define the bilinear form $m: \cX \times \cX \to \R$ by 
	\begin{equation*} 
	m(w_1,w_2) := \int_{\Omega_{t,x}} {a}((t,x); w_1(t,x), w_2(t,x)) \dd (t,x), \quad \forall  w_1,w_2 \in \cX.
	\end{equation*}
	Since the function $(t,x) \mapsto {a}((t,x); \phi, \psi)$ is assumed to be measurable for all $\phi,\psi \in V$ (see \eqref{ass_a_meas}) and $a((t,x),\cdot,\cdot)$ is continuous and coercive with constants ${c_a}$, ${\alpha_a}$ independent of $(t,x)$ (\cref{ass_a_cont} and \cref{ass_a_gard} with ${\lambda_a} \leq 0$), $m$ is well-defined{,} continuous{,} and coercive over $\cX\times\cX$ with constants ${c_a}$ and ${\alpha_a}$.  
	Therefore, by the Lax-Milgram theorem {it} exists a unique $z_p \in \cX$ with 
	\begin{equation} \label{eq:def_z_p}
	m(z_p, w) = \la {r_p}, w \ra_{\cX', \cX} \quad \forall w \in \cX.
	\end{equation}
	Due to the definitions of $z_p$, ${r_p}$, and $m$, there holds\footnote{In the following, we omit the $(t,x)$ dependence in the integrals.}
	\begin{equation}\label{eq:a_z_p_relation}
	\int_{\Omega_{t,x}} {a}(z_p, w) \dd (t,x) = \int_{\Omega_{t,x}} \la -{k} \cdot \nabla_{t,x} p, w \ra_{V'\!, V} \dd (t,x) \quad \forall w \in \cX.
	\end{equation}
	We now define $w_p := p + z_p \in \cX$.
	To bound $ b(w_p, p)$ from below we use \cref{eq:a_z_p_relation} for {$w=w_p$},
	and the integration by parts formula from \cref{prop:trace_PI}:
	\begin{equation}\label{eq:bwpp}
	\begin{aligned} 
	\hspace{-0.9cm}b(w_p&, p) 
	= \int_{\Omega_{t,x}}\!\! \la p+ z_p, -{k} \cdot \nabla_{t,x} p \ra_{V, V'} +  {a}(p + z_p, p) \dd (t,x) \\
	&= \int_{\Omega_{t,x}}\!\! \la p, -{k} \cdot \nabla_{t,x} p \ra_{V , V'} +  {a}(z_p, z_p) + {a}(p, p)
	+  \la -{k} \cdot \nabla_{t,x} p, p \ra_{V'\!, V} \dd (t,x)\hspace{-0.9em} \\
	&\geq {\alpha_a}(\|p\|_{\cX}^2 + \|z_p\|_{\cX}^2)
	+ 2  \int_{\Omega_{t,x}}\!\! \la -{k} \cdot \nabla_{t,x} p, p\ra_{V'\!, V} \dd (t,x).  \\
	& = {\alpha_a}(\|p\|_{\cX}^2 + \|z_p\|_{\cX}^2)
	+ \int_{\Gamma_-}\!\! p^2  \left | {k} \cdot n \right | \dd s \quad \geq \quad {\alpha_a}(\|p\|_{\cX}^2 + \|z_p\|_{\cX}^2). 
	\end{aligned}
	\end{equation}
	Since we have
	$\la {r_p}, w \ra_{\cX',\cX} = m(z_p, w) \leq  {c_a} \|z_p\|_\cX \|w\|_\cX$ for all $w \in \cX$, {it} holds 
	\begin{equation}\label{eq:est_f_p_2}
	\|{r_p}\|_{\cX'} \leq  {c_a} \|z_p\|_\cX.
	\end{equation}
	Using the definition of $w_p$, ${r_p}$, and the norm of $\cY$ as defined in \cref{eq:h1kin_norm}, we can then estimate
	\begin{equation}\label{eq:new_b_final_est}
	\begin{aligned}
	\|w_p\|_\cX \|p\|_\cY
	&=
	\|p + z_p\|_\cX
	\left (\|p\|_\cX^2 + \|{r_p}\|_{\cX'}^2  \right )^{1/2} \\
	&\!\!\!\overset{\eqref{eq:est_f_p_2}}{\leq} 
	\left [ \|p + z_p\|_\cX^2 
	\left ( \|p \|_\cX^2 +  {c_a^2}\|z_p \|_\cX^2 \right ) \right ]^{1/2}\\
	&\leq \left [ 2 \left ( \|p\|_\cX^2 + \|z_p\|_\cX^2 \right ) 
	\left ( \|p \|_\cX^2 +  {c_a^2}\|z_p \|_\cX^2 \right ) \right ]	^{1/2}\\
	&\leq \sqrt{2}\max\{1,  {c_a}\} \left ( \|p \|_\cX^2 + \|z_p \|_\cX^2 \right ) 
	\overset{\eqref{eq:bwpp}}{\leq} \frac{\sqrt{2}\max\{1, {c_a}\}}{{\alpha_a}}  b(w_p,p).
	\end{aligned}
	\end{equation}
	Since $p \in \cY$ was chosen arbitrarily, we thus have
	\begin{equation}
	\inf_{p \in \cY} \sup_{w \in \cX} \frac{b(w,p)}{\|w\|_{\cX}\|p\|_{\cY}} \geq 
	\beta := \frac{{\alpha_a}}{\sqrt{2}\max\{1, {c_a}\}},
	\end{equation}
	i.e., the claim for coercive ${a}$. 
	
	To address the case that ${a}$ fulfills the G\aa rding inequality \cref{ass_a_gard} with ${\lambda_a} > 0$, we use a standard temporal transformation of the full problem as proposed e.g.\ in \cite{SS2009, UP2014}. We set $\hat w := e^{-{\lambda_a} t}w$ for $w \in \cX$, $\hat p = e^{{\lambda_a} t}p$ for $p \in \cY$, and define the bilinear form $\hat b :\cX \times \cY \to \R$ by
	\begin{equation} \label{eq:temp_transf}
	\hat b(\hat w, \hat p) 
	:= \int_{\Omega_{t,x}}\!\! \la \hat w, - {k} \cdot \nabla_{t,x} \hat p \ra_{V, V'}
	+ {a}((t,x);\hat w, \hat p) + {\lambda_a} (\hat w, \hat p)_{L^2(\Omega_v)} \dd (t,x).
	\end{equation}
	Then it holds $b(w,p)=\hat b(\hat w,\hat p)$ for all $w \in \cX, p \in \cY$. The transformed bilinear form $\hat b$ {is the same as} $b${,} with {a} transformed velocity bilinear form ${\hat a}: V \times V \to \R$ defined by ${\hat a}((t,x);\phi,\psi) = {a}((t,x);\phi,\psi) + {\lambda_a} (\phi, \psi)_{L^2(\Omega_v)}$ for $\phi,\psi \in V$. Due to the G\aa rding inequality \cref{ass_a_gard} and continuity \cref{ass_a_cont} of ${a}$, ${\hat a}$ is coercive with constant ${\hat\alpha_a} = {\alpha_a}$ and continuous with constant ${\hat c_a} = {c_a} + {\lambda_a}$.  
	As in \cite{SS2009}, we can estimate the norms of $\hat w \in \cX$ and $\hat p \in \cY$ by
	\begin{align*}
	\|\hat w\|_\cX  \geq e^{-{\lambda_a} T} \|w\|_{\cX}, \qquad
	\|\hat p \|_\cY \geq \left (\max\{ 1+2{\lambda_a^2}, 2 \} \right )^{-\frac{1}{2}} \|p\|_{\cY}, 
	\end{align*}
	where we use $\|\psi\|_{V'} \leq \|\psi\|_{L^2(\Omega_v)} \leq \|\psi\|_{V}$ for the estimation of the $\cY$-norm. 
	
	Then, the dual inf-sup constant of $b$ can be bounded from below as follows
	\begin{align*}
	\inf_{p \in \cY} \sup_{w \in \cX} \frac{b(w,p)}{\|w\|_\cX \|p\|_\cY} 
	&= \inf_{\hat p \in \cY} \sup_{\hat w \in \cX} \frac{\hat b(\hat w, \hat p)}{\|\hat w\|_\cX \|\hat p\|_\cY} \frac{\|\hat w\|_\cX}{\|w\|_\cX} \frac{\|\hat p\|_\cY}{\|p\|_\cY} \\
	&\geq \frac{{\alpha_a}}{\sqrt{2}\max\{1, {c_a}+{\lambda_a}\}} \frac{e^{-{\lambda_a} T}}{\sqrt{\max\{ 1+2{\lambda_a^2}, 2 \}}}.
	\end{align*}
	Since the dual inf-sup condition implies surjectivity of the operator $B : \cX \to \cY'$ defined by $\la B \cdot , \cdot \ra_{\cY',\cY} = b(\cdot,\cdot)$ and thus existence of a weak solution to \eqref{eq:var_form} (see for instance \cite[Lemma A.40, Remark A.41]{EG2004}), this concludes the proof.
\end{proof}

\subsection{Uniqueness of the weak solution} \label{subsec:uniqueness}

As already mentioned in \cref{sect:function_spaces}, we were not able to prove all necessary trace results in our specific function space. To show uniqueness of the weak solution, we therefore assume the following:
\begin{assumption} \label{ass:global_trace}
	Let $w \in H^1_{\mathrm{FP}}(\Omega)$ {such that} $w=0$ a.e.\ on $\Gamma_-$ {and} $b(w,p) = 0$ for all $p \in \cY$. 
	Then, we {assume this implies} $w \in L^2(\partial\Omega, |{k}\cdot n|)$ and the integration by parts formula
	\begin{equation} \label{eq:integration_by_parts}
	\int_{\Omega_{t,x}} \la {k} \cdot \nabla_{t,x} w , w \ra_{V'\!, V} \dd (t,x) = \tfrac{1}{2} \int_{\partial\Omega} w^2  {k} \cdot n \dd s 
	\end{equation}
	holds. 
\end{assumption}
As discussed in more detail in {the supplementary material, we do not know how to prove \cref{ass:global_trace}, since, for instance, ideas from existing approaches for the related space $H^1_\mathrm{NT}(\Omega) = \{ w \in L^2(\Omega) : {k} \cdot \nabla_{t,x} w \in L^2(\Omega)\}$ cannot {be} readily transferred to the $H^1_{\mathrm{FP}}(\Omega)$ case. We therefore leave it as an open problem. We emphasize that the respective trace and integration by parts result holds for all $H^1_\mathrm{NT}(\Omega)$-functions with zero inflow or outflow trace (cf.\ \cite{Bar70,Cessenat1984,Cessenat1985},\cite[Chap.\ XXI]{DauLio1993_Vol6}), and also for all $H^1_{\mathrm{FP}}(\Omega)$-functions that can be approximated by smooth functions vanishing on the inflow or outflow boundary (\cref{prop:trace_PI}).
Additionally, \cref{ass:global_trace} {only refers} to $H^1_{\mathrm{FP}}(\Omega)$-functions with vanishing trace on $\Gamma_-$ and satisfying a weak form of the differential equation with zero boundary condition. This additional condition on the considered functions might make it possible to show and exploit a higher regularity of the considered functions to prove existence of suitable traces and \eqref{eq:integration_by_parts}.

We now show uniqueness of the weak solution in the form of surjectivity of the dual operator. To that end, we follow the general structure of respective proofs for parabolic equations \cite[Thm 6.6, p.\ 283]{EG2004} and transport equations \cite[Thm.\ 16]{Azerad1996}{.} We take a function $w \in \cX$ solving \eqref{eq:var_form} with zero right-hand side and prove that $w=0$ by showing that $w$ possesses space- and time derivatives, that $w$ has trace zero on the outflow boundary, and finally that $w$ must therefore vanish on the whole domain.
\begin{theorem} \label{prop:surjectivity}
	If \cref{ass:global_trace} holds, then for all $0 \neq w \in \cX$ we have
	\begin{equation*}
	\sup_{p \in \cY} b(w,p) > 0.
	\end{equation*}
\end{theorem}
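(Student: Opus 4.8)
The strategy is to prove the contrapositive: if $w\in\cX$ satisfies $b(w,p)=0$ for all $p\in\cY$, then $w=0$. As in the proof of \cref{prop:inf_sup} I would first reduce to the coercive case $\lambda_a\le 0$ via the temporal rescaling $\hat w=e^{-\lambda_a t}w$, $\hat p=e^{\lambda_a t}p$: since $e^{\pm\lambda_a t}\in C^1(\bar\Omega)$, \cref{lem:product_continuous} makes $w\mapsto\hat w$ and $p\mapsto\hat p$ isomorphisms of $\cX$ and of $\cY$, the identity $b(w,p)=\hat b(\hat w,\hat p)$ turns the hypothesis into $\hat b(\hat w,\cdot)=0$, and $\hat a=a+\lambda_a(\cdot,\cdot)_{L^2(\Omega_v)}$ is coercive; the hypotheses, and hence the conclusion, of \cref{ass:global_trace} carry over to $\hat b$ unchanged, as that statement concerns only the kinetic transport structure. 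So assume $\lambda_a\le 0$. Testing against $p\in C^\infty(\bar\Omega)$ with support compactly contained in the interior of $\Omega$ (such $p$ lie in $\cY$) and integrating the transport term by parts with no boundary contribution, $0=b(w,p)=\int_{\Omega_{t,x}}\la w,-k\cdot\nabla_{t,x}p\ra_{V,V'}+a((t,x);w,p)\,\dd(t,x)$ shows that the distributional derivative $k\cdot\nabla_{t,x}w$ coincides with the element $\mathcal Aw\in\cX'$ given by $\la\mathcal Aw,\phi\ra_{\cX',\cX}=\int_{\Omega_{t,x}}a((t,x);w,\phi)\,\dd(t,x)$, which by \cref{ass_a_cont} satisfies $\|\mathcal Aw\|_{\cX'}\le c_a\|w\|_\cX$. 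In particular $k\cdot\nabla_{t,x}w\in L^2(\Omega_{t,x};V')$, so $w\in H^1_{\mathrm{FP}}(\Omega)$ and, by \cref{lem:trace}, $w$ admits local traces on $\Gamma_+\cup\Gamma_-$.

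Next I would show that the inflow trace of $w$ vanishes. For $p\in C^\infty(\bar\Omega)$ whose support is disjoint from $\Gamma_+$ and meets $\partial\Omega$ only in a compact set $K\subset\Gamma_-$, approximating $w$ by smooth functions in $H^1_{\mathrm{FP}}(\Omega)$ (\cref{prop:density}) and passing to the limit using the \emph{local} trace bound of \cref{lem:trace} on $K$ yields the integration-by-parts identity $\la k\cdot\nabla_{t,x}w,p\ra_{\cX',\cX}+\int_\Omega w\,(k\cdot\nabla_{t,x}p)\,\dd((t,x),v)=\int_{\Gamma_-}w\,p\,(k\cdot n)\,\dd s$. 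Substituting $\mathcal Aw=-k\cdot\nabla_{t,x}w$ from the previous step, the relation $0=b(w,p)$ collapses to $\int_{\Gamma_-}w\,p\,(k\cdot n)\,\dd s=0$ for every such $p$. Since every $\varphi\in C^\infty_c(\Gamma_-)$ is the inflow restriction of some $p\in\cY$ (extend $\varphi$ to $\bar\Omega$ and multiply by a cutoff vanishing near $\Gamma_+$), and such $\varphi$ are dense in $L^2(K,|k\cdot n|)$ for each compact $K\subset\Gamma_-$, I would conclude $w=0$ a.e.\ on $\Gamma_-$.

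At this point $w\in H^1_{\mathrm{FP}}(\Omega)$, $w=0$ a.e.\ on $\Gamma_-$, and $b(w,\cdot)=0$, so \cref{ass:global_trace} applies and yields $w\in L^2(\partial\Omega,|k\cdot n|)$ together with $\int_{\Omega_{t,x}}\la k\cdot\nabla_{t,x}w,w\ra_{V',V}\,\dd(t,x)=\tfrac12\int_{\partial\Omega}w^2\,k\cdot n\,\dd s$. The left-hand side equals $-\int_{\Omega_{t,x}}a((t,x);w,w)\,\dd(t,x)$ by the first step, and the right-hand side equals $\tfrac12\int_{\Gamma_+}w^2\,|k\cdot n|\,\dd s\ge 0$, because the characteristic part of $\partial\Omega$ (where $k\cdot n=0$) contributes nothing and $w$ vanishes on $\Gamma_-$. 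Coercivity (\cref{ass_a_gard} with $\lambda_a\le 0$) gives $\int_{\Omega_{t,x}}a((t,x);w,w)\,\dd(t,x)\ge\alpha_a\|w\|_\cX^2$, hence $0\le\tfrac12\int_{\Gamma_+}w^2|k\cdot n|\,\dd s=-\int_{\Omega_{t,x}}a((t,x);w,w)\,\dd(t,x)\le-\alpha_a\|w\|_\cX^2\le 0$; thus $\|w\|_\cX=0$, so $w=0$ (and, in passing, the outflow trace of $w$ is zero as well). Equivalently, $\sup_{p\in\cY}b(w,p)>0$ for every $w\ne 0$, which is the assertion.

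The main obstacle is the second step. Because \cref{lem:trace} only furnishes traces on compact subsets of $\Gamma_\pm$ and no global trace in $L^2(\partial\Omega,|k\cdot n|)$ is available a priori, the integration-by-parts identity cannot be invoked for arbitrary $p\in\cY$; one is forced to work with test functions supported away from $\Gamma_+$ and with compact inflow footprint, and then recover $w|_{\Gamma_-}=0$ by density. Everything downstream — the energy chain in the third step and the $\lambda_a>0$ rescaling — is routine, and the only genuinely missing ingredient is \cref{ass:global_trace} itself, which is taken as a hypothesis here precisely because it could not be established.
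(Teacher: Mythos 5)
Your proposal is correct and follows the same overall skeleton as the paper's proof: (i) recover the weak kinetic derivative $k\cdot\nabla_{t,x}w=-A_vw\in\cX'$ by testing against interior-supported functions, so that $w\in H^1_{\mathrm{FP}}(\Omega)$; (ii) show the inflow trace vanishes; (iii) invoke \cref{ass:global_trace} and run the coercivity/energy argument on $\Gamma_+$; (iv) handle $\lambda_a>0$ by the temporal rescaling (which you do up front and the paper does at the very end --- both gloss over the fact that \cref{ass:global_trace} must be read as applying to the transformed form $\hat b$ in the same way). The one step where you genuinely diverge is (ii). The paper takes a cutoff $z\in C^\infty(\bar\Omega)$ vanishing on $\hat\Gamma\setminus K$ for a compact $K\subset\Gamma_-$, shows $wz\in\cY$ via \cref{prop:density} and \cref{lem:product_continuous}, and tests the equation with the \emph{quadratic} function $wz$ to obtain $\int_K w^2z\,|k\cdot n|\,\dd s=0$ directly; arbitrariness of $z|_K$ then kills $w$ on $K$. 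You instead keep the test function smooth, pass to the limit in the Green identity using the local trace bound of \cref{lem:trace} on $K$, and conclude $\int_{\Gamma_-}wp\,(k\cdot n)\,\dd s=0$, finishing by surjectivity and density of smooth inflow traces in $L^2(K,|k\cdot n|)$. Both routes rest on exactly the same two ingredients (\cref{prop:density} and \cref{lem:trace}); the paper's quadratic test function avoids your extra trace-lifting/density step, while your linear-duality version avoids having to verify $wz\in\cY$. One cosmetic slip: you first write that $k\cdot\nabla_{t,x}w$ \emph{equals} $\mathcal Aw$, but the correct identity (which you in fact use everywhere afterwards, and which matches \cref{eq:w_prop_V'}) is $k\cdot\nabla_{t,x}w=-\mathcal Aw$; this does not affect the argument.
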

\begin{proof}	
	Let $w \in \cX$ such that 
	\begin{equation} \label{eq:inj_w}
	b(w,p) = 0 \quad  \forall \, p \in \cY.
	\end{equation}
	To prove the claim, we need to show that $w = 0$. 
	First, we show that $w$ has a weak derivative $-{k} \cdot \nabla_{t,x} w \in \cX' = L^2(\Omega_{t,x}; V')$. To that end, let $\psi \in C_0^\infty(\Omega_{t,x})$ and $\phi \in V$ be arbitrary. Then $\psi \phi = 0$ on ${\hat\Gamma}$, and by approximating $\phi$ in $C^\infty(\Omega_v) $ we see that $\psi \phi \in \cY$.  
	Using the definition of the weak $(t,x)$-derivative and testing \eqref{eq:inj_w} with $p = \psi \phi$ we obtain
	\begin{align*}
	\int_{\Omega_{t,x}} &\left \la {k(t,x)} \cdot \nabla_{t,x} w(t,x), \phi \right \ra_{V'\!,V} \psi(t,x) d(t,x)  \\
	&= - \int_{\Omega_{t,x}} \left \la w(t,x), {k(t,x)} \cdot \nabla_{t,x} \psi(t,x) \phi \right \ra_{V,V'} \dd (t,x) \\
	&= - \int_{\Omega_{t,x}} {a}((t,x); w(t,x), \psi(t,x) \phi) \dd (t,x)\\
	&= - \int_{\Omega_{t,x}} \la{ A}(t,x) w(t,x), \phi \ra_{V'\!,V} \psi(t,x) \dd (t,x),
	\end{align*}
	where the operator $A_v(t,x) \in \mathcal{L}(V,V')$ is defined as $\la A_v(t,x) \phi, \rho \ra_{V'\!,V} = {a}((t,x);\phi,\rho)$ for all $\phi,\rho \in V$, a.e.\ $(t,x) \in \Omega_{t,x}$. 
	Due to the density of $C_0^\infty(\Omega_{t,x})$ in $L^2(\Omega_{t,x})$ 
	have 
	\begin{equation} \label{eq:w_prop_V'}
	- {k} \cdot \nabla_{t,x} w = A_v w \in \cX',
	\end{equation}
	which especially means that $w \in H^1_{\mathrm{FP}}(\Omega)$. 
	
	Next, let $K \subset\subset \Gamma_-$ be an arbitrary but fixed compactly embedded subset of $\Gamma_-$. Moreover, let $z \in C^\infty(\bar\Omega)$ with $z = 0$ on ${\hat\Gamma} \setminus K$.
	We show $wz \in \cY$: Since $w \in H^1_{\mathrm{FP}}(\Omega)$, due to \cref{prop:density} there is a sequence $(w_n)_{n \in \N} \subset C^\infty(\bar\Omega)$ with $\|w_n - w\|_{H^1_{\mathrm{FP}}(\Omega)} \overset{n \to \infty}{\to} 0$. Therefore, we have $w_n z \in C^\infty(\bar\Omega)$ with $wz = 0$ on $\Gamma_+$. 
	Due to \cref{lem:product_continuous}, it holds
	\begin{equation*}
	\|wz - w_n z\|_{H^1_{\mathrm{FP}}(\Omega)}
	\leq C \|z\|_{C^1(\Omega)} \|w-w_n\|_{H^1_{\mathrm{FP}}(\Omega)} 
	\end{equation*}
	and thus $w_n z \to wz$ in $H^1_{\mathrm{FP}}(\Omega)$ as $n \to \infty$. Invoking the definition of $\cY$ in \eqref{def_X_Y},\eqref{def:HkinG+-} we obtain  $wz \in \cY$. 
	
	Since $K \subset \Gamma_-$ is compact, we may apply \cref{lem:trace} to infer that $w$ has a trace on $K$ and $w|_{K} \in L^2(K, |{k} \cdot n|)$. Thanks to $z|_{{\hat\Gamma}} \in L^\infty({\hat\Gamma})$ and $\supp z|_{{\hat\Gamma}} \subset K$, we have
	\begin{equation*}
	\left | \int_{{\hat\Gamma}} w^2 z \left |{k} \cdot n \right |\dd s \right |
	= \left | \int_{K} w^2 z \left |{k} \cdot n \right |\dd s \right |
	\leq \|z\|_{L^\infty(K)} \|w\|_{L^2(K, |{k} \cdot n|)}^2 < \infty.
	\end{equation*}
	
	As a consequence we can apply the linear functional in \eqref{eq:w_prop_V'} to $wz \in \cY \subset \cX$, perform integration by parts, since the boundary integral exists, and use \cref{eq:inj_w}:
	\begin{align*}
	0 &= 
	\int_{\Omega_{t,x}} \la {k} \cdot \nabla_{t,x} w +  A_v w , wz \ra_{V'\!,V} d(t,x) \\
	&= \int_{\Omega_{t,x}} \la w, -{k} \cdot \nabla_{t,x} (wz) \ra_{V,V'} +  {a}(w,wz) d(t,x) 
	+ \int_{{\hat\Gamma}} w^2 z  {k} \cdot n  \dd s \\
	&= \underbrace{b(w,wz)}_{=0} - \int_{K}   w^2 z \left | {k}  \cdot n \right | \dd s
	= - \int_{K} w^2 z \left |{k} \cdot n \right |\dd s.
	\end{align*}
	Since $z|_K \in C^\infty_0(K)$ can be chosen arbitrarily and $|{k} \cdot n| > 0$ on $K$, the fundamental lemma of calculus of variations yields $w = 0$ a.e.\ on $K$. As also $K \subset \Gamma_-$ was chosen arbitrarily, we have $w = 0$ a.e.\ on $\Gamma_-$.
	
	Thanks to \cref{ass:global_trace}, it therefore holds $w \in L^2(\partial\Omega, |{k} \cdot n|)$. We can thus use integration by parts for \cref{eq:w_prop_V'} applied to $w$. Assuming first that ${a}$ is coercive, i.e., ${\lambda_a} \leq 0$, we obtain 
	\begin{align*}
	0 &= \int_{\Omega_{t,x}} \la {k} \cdot \nabla_{t,x} w + A_v w , w \ra_{V'\!,V} \dd(t,x) \\
	&= \int_{\Omega_{t,x}} \la {k} \cdot \nabla_{t,x} w, w \ra_{V'\!,V} \dd(t,x)
	+ \int_{\Omega_{t,x}} {a}(w,w) \dd(t,x) \\
	&\geq \tfrac{1}{2}\int_{\Gamma_+} w^2 \underbrace{{k} \cdot n}_{> 0} \dd s + {\alpha_a} \|w\|_{\cX}^2,
	\end{align*} 
	which implies $w = 0$. 
	
	If ${a}$ is not coercive, we use the temporal transformation described in the proof of \cref{prop:inf_sup}.
	Setting $\hat w = e^{-{\lambda_a} t} w$ and using the definition of $\hat b$ in \eqref{eq:temp_transf}, we see that \eqref{eq:inj_w} is equivalent to $\hat b(\hat w,\hat p) = 0$ for all $\hat p \in \cY$. Since $\hat a$ is coercive, we have proven that $\hat w = 0$ and thus also $w=0$. 	
\end{proof}

We summarize our findings in the following theorem.
\begin{theorem}[Well-posedness]
	There exists a solution $u \in \cX$ to the variational problem \cref{eq:var_form}. If \cref{ass:global_trace} holds, the solution is unique and satisfies the stability estimate
	\begin{equation*}
	\|u\|_\cX \leq \frac{1}{\beta} \|f\|_{\cY'}
	\end{equation*}
	for $\beta$ as defined in \cref{prop:inf_sup}.
\end{theorem}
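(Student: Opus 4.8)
The plan is to assemble the three claims directly from \cref{prop:inf_sup} and \cref{prop:surjectivity} within the Banach--Ne\v{c}as--Babu\v{s}ka framework, using that $\cX = L^2(\Omega_{t,x};V)$ and $\cY = H^1_{\mathrm{FP},\Gamma_+}(\Omega)$ (with norms \eqref{eq:norm_cX}, \eqref{eq:norm_cY}) are Hilbert spaces, hence reflexive. Let $B \in \cL(\cX,\cY')$ be the operator with $\la Bw,p\ra_{\cY',\cY} = b(w,p)$ and $B^* \in \cL(\cY,\cX')$ its adjoint, so that $\la B^*p,w\ra_{\cX',\cX} = b(w,p)$ as well. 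For existence I would simply invoke \cref{prop:inf_sup}: the dual inf-sup condition is exactly the statement $\|B^*p\|_{\cX'} \geq \beta\|p\|_\cY$ for all $p \in \cY$, so $B^*$ is injective with closed range and, by the closed range theorem (as already noted at the end of the proof of \cref{prop:inf_sup} via \cite[Lemma~A.40, Remark~A.41]{EG2004}), $B$ is surjective onto $\cY'$; in particular $Bu = f$ admits a solution $u \in \cX$.

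For uniqueness I would invoke \cref{prop:surjectivity}: under \cref{ass:global_trace} it gives $\sup_{p\in\cY} b(w,p) > 0$ for every $0 \neq w \in \cX$, i.e.\ $Bw \neq 0$ whenever $w \neq 0$, so $B$ is injective. Together with the surjectivity from the previous step, $B \colon \cX \to \cY'$ is a bounded bijection of Hilbert spaces and hence a topological isomorphism by the bounded inverse theorem; thus the weak solution $u = B^{-1}f$ is unique.

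For the stability estimate it remains to show $\|B^{-1}\|_{\cL(\cY',\cX)} \leq 1/\beta$. Here I would use the identity, valid for a bounded bijection between reflexive spaces, that the primal and dual inf-sup constants of $b$ coincide: since $(B^{-1})^* = (B^*)^{-1}$ one computes $\inf_{w\neq 0}\|Bw\|_{\cY'}/\|w\|_\cX = \|B^{-1}\|^{-1} = \|(B^{-1})^*\|^{-1} = \|(B^*)^{-1}\|^{-1} = \inf_{p\neq 0}\|B^*p\|_{\cX'}/\|p\|_\cY \geq \beta$, the last inequality being \cref{prop:inf_sup}. Hence $\|B^{-1}\| \leq 1/\beta$, and therefore $\|u\|_\cX = \|B^{-1}f\|_\cX \leq \tfrac{1}{\beta}\|f\|_{\cY'}$ with $\beta$ as in \cref{prop:inf_sup}. (Equivalently, one may quote the Banach--Ne\v{c}as--Babu\v{s}ka theorem \cite[Thm.\ 2.6]{EG2004} applied to the adjoint problem, whose two hypotheses are precisely the dual inf-sup bound and the injectivity of $B$ just established.)

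The argument is essentially bookkeeping and I do not expect a genuine obstacle; the only point deserving care is that the stability constant may be taken equal to the \emph{dual} inf-sup constant $\beta$ of \cref{prop:inf_sup} rather than to a separately estimated primal inf-sup constant. This is legitimate exactly because $B$ is bijective between reflexive spaces, which forces the two inf-sup constants to agree via the duality identity above; reflexivity of $\cX$ and $\cY$ is the (easily checked) structural fact underpinning the whole reduction.
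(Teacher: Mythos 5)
Your proposal is correct and follows essentially the same route as the paper, which likewise deduces existence from the dual inf-sup condition of \cref{prop:inf_sup}, uniqueness from \cref{prop:surjectivity} under \cref{ass:global_trace}, and the stability bound from standard inf-sup (Banach--Ne\v{c}as--Babu\v{s}ka) theory; you merely spell out the duality identity equating the primal and dual inf-sup constants that the paper leaves implicit.
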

\begin{proof}
	{
		Standard inf-sup theory ensures the existence of a solution due to the continuity of $b$ and the dual inf-sup condition stated in \cref{prop:inf_sup}. Under \cref{ass:global_trace}, \cref{prop:surjectivity} yields the dual surjectivity, which implies uniqueness and the stability estimate. }
\end{proof}

\section{Discretization} \label{sect:discretization}

We now design a stable and efficient discretization scheme for \eqref{eq:var_form}. To that end, we use a Petrov-Galerkin projection onto
problem-dependent discrete spaces realizing the stable function pairs with test functions $p \in \cY$ and trial functions $w_p \in \cX$ developed in the proof of \cref{prop:inf_sup}. As a result, the discrete inf-sup stability and thus the well-posedness of the discrete problem follow analogously to the continuous results with the same stability constant. We then illustrate for a class of data functions how the trial space functions $w_p^\delta$ can be efficiently computed by solving low-dimensional elliptic problems in the velocity domain.

\subsection{Stable Petrov-Galerkin schemes}

To define an approximation of the solution $u \in \cX$ of \cref{eq:var_form}, we use a Petrov-Galerkin projection onto suitable discrete spaces: Given discrete trial and test spaces $\cXd \subset \cX$ and $\cYd \subset \cY$, the Petrov-Galerkin approximation $u^\delta \in \cXd$ is defined by
\begin{equation} \label{eq:Petrov-Galerkin}
b(u^\delta, v^\delta) = f(v^\delta) \quad \forall v^\delta \in \cYd.
\end{equation} 
Well-posedness then depends on the inf-sup stability of the discrete problem. To find a pair of spaces leading to a stable scheme, we transfer ideas from \cite{BSU2019} to our setting. In \cite{BSU2019}, a stable discretization with a discrete inf-sup constant {equal to} one was built for a transport equation by fixing a discrete test space and defining a problem dependent trial space with optimal stability properties. 
In this manuscript{,} we will use the same strategy: We start with a discrete test space and define the corresponding trial space based on the trial space functions used in the proof of \cref{prop:inf_sup}.

To that end, we first define a discrete space $V_h \subset V$ for the discretization in the  velocity direction. Since the $\cY$-norm contains a term in the $\cX' = L^2(\Omega_{t,x},V')${-}norm {(see \eqref{eq:norm_cY})} which is not computable, we consider the norm
\begin{equation}
\|w\|_{L^2(\Omega_{t,x},\Vhpr)}^2 := \int_{\Omega_{t,x}} \|w(t,x)\|_{\Vhpr}^2 \dd (t,x), \quad \|\psi\|_{\Vhpr} := \sup_{\phi^h \in V_h} \frac{\la \psi,\phi^h \ra_{V'\!,V}}{\|\phi^h\|_V}
\end{equation}
instead of $\|\cdot\|_{L^2(\Omega_{t,x},V')}$ where necessary. 

Let $\cYd \subset \cY$ be a discrete space for which we assume $w^\delta(t,x) \in V_h$ for all $w^\delta \in \cYd$ and a.e.\ $(t,x) \in \Omega_{t,x}$. 
{The space} $\cYd$ will be used as {the} test space for the Petrov-Galerkin approximation. 
We define the discrete version of the $\cY$-norm by
\begin{equation}
\|w\|_{\cYd}^2 := \|w\|_{L^2(\Omega_{t,x},V)}^2 + \|{k} \cdot \nabla_{t,x} w\|_{L^2(\Omega_{t,x},\Vhpr)}^2.
\end{equation}

Since we will make use of the function pairs developed in the proof of \cref{prop:inf_sup}, we assume for the discretization that the velocity bilinear form ${a}$ is coercive, i.e., ${\lambda_a} \leq 0$. For problems, where ${a}$ only satisfies the G\aa rding inequality \eqref{ass_a_gard} with ${\lambda_a} > 0$, a temporal transform{ation} of the problem as described in \cref{sect:var_form} can be performed{. Then,} the transformed problem with a coercive bilinear form ${\hat a}$ can be discretized.

We now define a problem-dependent discrete trial space.
For each $p^\delta \in \cYd$, we denote $f_{p}^\delta := {-} {k} \cdot \nabla_{t,x} p^\delta(t,x) \in\cX'$.  
We then define the function $z_p^\delta \in \cX$ as the solution of
\begin{equation} \label{eq:def_zpdelta}
{a}(z_p^\delta(t,x), \phi^h) = \la {r_p^\delta}(t,x), \phi^h \ra_{V'\!, V}, \quad \forall \phi^h \in V_h, \text{ a.e.\ } (t,x) \in \Omega_{t,x}.
\end{equation}
{The function} $z_p^\delta$ is the discrete counterpart of $z_p$ defined in \eqref{eq:def_z_p}, here {it is} defined pointwise in $\Omega_{t,x}$ due to the discrete setting.
Then, the discrete trial space $\cXd \subset \cX$ is defined as
\begin{align} \label{eq:def_cXd}
\cXd &:= \{ p^\delta + z_p^\delta : p^\delta \in \cYd \}.
\end{align}

\begin{proposition} \label{prop:discrete_well_posed}
	If ${\lambda_a} \leq 0$ in \cref{ass_a_gard} and thus {${a}$ is coercive}, and
	if the discrete trial and test spaces $\cXd$ and $\cYd$ are chosen according to \eqref{eq:def_cXd}, then there exists a unique solution $u^\delta \in \cXd$ to \eqref{eq:Petrov-Galerkin}. {Moreover, we have discrete inf-sup estimate
		\begin{equation}\label{eq:discrete_inf_sup} 
		\inf_{\substack{p^\delta \in \cYd\\{p^\delta \neq 0}}} 
		\sup_{\substack{w^\delta \in \cXd\\{w^\delta \neq 0}}}
		\frac{b(w^\delta, p^\delta)}{\|w^\delta\|_{\cX} \|p^\delta\|_{\cYd}}
		\geq \beta_\delta
		\geq {\alpha_a} (\sqrt{2}\max\{1,{c_a}\})^{-1}.
		\end{equation}
	}
\end{proposition}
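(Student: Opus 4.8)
The plan is to carry the coercive case of the proof of \cref{prop:inf_sup} over verbatim to the discrete level, using that by \eqref{eq:def_cXd} the space $\cXd$ is built precisely from the discrete analogues $w^\delta=p^\delta+z_p^\delta$ of the stable pairs $(w_p,p)=(p+z_p,p)$ employed there. As a preliminary I would check that the construction is sound: since $a$ is coercive on $V$ with constant ${\alpha_a}$ (\cref{ass_a_gard} with ${\lambda_a}\le 0$) and $V_h\subset V$, for a.e.\ $(t,x)$ the finite-dimensional problem \eqref{eq:def_zpdelta} has a unique solution $z_p^\delta(t,x)\in V_h$ by the Lax--Milgram theorem on $V_h$; expanding in a basis of $V_h$ exhibits $z_p^\delta$ as a measurable $V_h$-valued function, and the pointwise bound $\|z_p^\delta(t,x)\|_V\le{\alpha_a}^{-1}\|k\cdot\nabla_{t,x}p^\delta(t,x)\|_{\Vhpr}\le{\alpha_a}^{-1}\|k\cdot\nabla_{t,x}p^\delta(t,x)\|_{V'}$ together with $p^\delta\in\cY$ shows $z_p^\delta\in\cX$. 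Hence $p^\delta\mapsto p^\delta+z_p^\delta$ is a well-defined linear map $\cYd\to\cX$, so $\cXd$ is a finite-dimensional subspace of $\cX$.

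For the discrete inf-sup estimate \eqref{eq:discrete_inf_sup} I would fix $0\ne p^\delta\in\cYd$ and take $w^\delta:=p^\delta+z_p^\delta\in\cXd$ as the competitor in the supremum. The decisive point is that $p^\delta(t,x)\in V_h$ — which is exactly the standing hypothesis on $\cYd$ — while $z_p^\delta(t,x)\in V_h$ by construction, so both are admissible as the test function $\phi^h$ in \eqref{eq:def_zpdelta}. Inserting $\phi^h=p^\delta(t,x)$ resp.\ $\phi^h=z_p^\delta(t,x)$ and integrating over $\Omega_{t,x}$ reproduces the discrete analogues of the identities \eqref{eq:a_z_p_relation} relating the $a$-pairings of $z_p^\delta$ against $p^\delta$ and against itself to the kinetic transport pairing $\la-k\cdot\nabla_{t,x}p^\delta,\cdot\ra_{V'\!,V}$. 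Substituting these into $b(w^\delta,p^\delta)$, bounding the two diagonal $a$-terms below by coercivity, and applying the integration-by-parts identity of \cref{prop:trace_PI} to $p^\delta\in\cY$ (which contributes the nonnegative term $\int_{\Gamma_-}(p^\delta)^2|k\cdot n|\dd s$) reproduces the chain \eqref{eq:bwpp} line by line and gives
\[
b(w^\delta,p^\delta)\ \ge\ {\alpha_a}\bigl(\|p^\delta\|_\cX^2+\|z_p^\delta\|_\cX^2\bigr).
\]

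To control the denominator I would test \eqref{eq:def_zpdelta} with arbitrary $\phi^h\in V_h$ and use continuity of $a$, obtaining $\|k\cdot\nabla_{t,x}p^\delta(t,x)\|_{\Vhpr}\le{c_a}\|z_p^\delta(t,x)\|_V$ pointwise and hence $\|k\cdot\nabla_{t,x}p^\delta\|_{L^2(\Omega_{t,x},\Vhpr)}\le{c_a}\|z_p^\delta\|_\cX$ after integration; this is the discrete analogue of \eqref{eq:est_f_p_2}, and it is precisely here that using the computable discrete dual norm $\|\cdot\|_{\Vhpr}$ in $\|\cdot\|_{\cYd}$ rather than the $V'$-norm is what makes the loop close. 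Feeding this bound into $\|w^\delta\|_\cX\|p^\delta\|_{\cYd}$ and running the Cauchy--Schwarz/Young estimates of \eqref{eq:new_b_final_est} unchanged yields $\|w^\delta\|_\cX\|p^\delta\|_{\cYd}\le\tfrac{\sqrt2\max\{1,{c_a}\}}{{\alpha_a}}\,b(w^\delta,p^\delta)$, which together with the previous display is \eqref{eq:discrete_inf_sup} with $\beta_\delta\ge{\alpha_a}(\sqrt2\max\{1,{c_a}\})^{-1}$.

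Finally, for well-posedness of \eqref{eq:Petrov-Galerkin} I would note that putting $w^\delta=0$ into the lower bound above forces $p^\delta=0$, so $p^\delta\mapsto p^\delta+z_p^\delta$ is injective and $\dim\cXd=\dim\cYd<\infty$; combined with the dual inf-sup condition just proved — which makes the adjoint of the discrete operator $B_\delta:\cXd\to\cYd'$, $\la B_\delta\cdot,\cdot\ra=b(\cdot,\cdot)$, injective — this gives bijectivity of $B_\delta$ and hence a unique $u^\delta$ solving \eqref{eq:Petrov-Galerkin}. I do not expect a genuine obstacle here: the argument is a one-to-one transcription of the coercive case of \cref{prop:inf_sup}, and the only points that need care are bookkeeping ones — the admissibility of $p^\delta(t,x)$ and $z_p^\delta(t,x)$ as test functions in \eqref{eq:def_zpdelta} (hence the hypotheses on $\cYd$ and the pointwise definition of $z_p^\delta$), the measurability and $\cX$-membership of $z_p^\delta$, and the consistent use of $\|\cdot\|_{\Vhpr}$ so that the bound $\|k\cdot\nabla_{t,x}p^\delta\|_{\Vhpr}\le{c_a}\|z_p^\delta\|_V$ matches exactly the term occurring in $\|\cdot\|_{\cYd}$.
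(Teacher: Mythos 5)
Your proposal is correct and follows essentially the same route as the paper's proof: fix $p^\delta$, use $w^\delta=p^\delta+z_p^\delta$ as the competitor, reproduce \eqref{eq:bwpp} and \eqref{eq:new_b_final_est} with the discrete norm $\|\cdot\|_{\cYd}$ via the bound $\|{r_p^\delta}\|_{L^2(\Omega_{t,x},\Vhpr)}\leq {c_a}\|z_p^\delta\|_{\cX}$, and conclude well-posedness from $\dim\cXd=\dim\cYd$ together with the discrete inf-sup condition. The only additions beyond the paper's argument are the (correct, and welcome) bookkeeping checks on the pointwise solvability, measurability, and $\cX$-membership of $z_p^\delta$, which the paper leaves implicit.
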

\begin{remark} \label{remark:temp_transf}
	For {${\lambda_a}>0$} the respective result holds for the discretization of the transformed problem according to \eqref{eq:temp_transf} with ${\hat a}$ being coercive.	
\end{remark}
\begin{proof}
	We can reuse all essential parts of the proof of the inf-sup constant for the continuous problem to also prove discrete inf-sup stability of \eqref{eq:Petrov-Galerkin}.
	
	Let $0 \neq w^\delta \in \cXd$ be fixed. Then, by definition of $\cXd$ there is $p^\delta \in \cYd$ such that $w^\delta = p^\delta + z_p^\delta$ with $z_p^\delta$ defined as in \eqref{eq:def_zpdelta}. 
	By using \eqref{eq:def_zpdelta} and the same arguments as in \eqref{eq:bwpp} we obtain
	\begin{equation} \label{eq:bwpp_discr}
	b(w^\delta, p^\delta) = b(p^\delta + z_p^\delta, p^\delta) \geq {\alpha_a} \left (\|p^\delta\|_\cX^2 + \|z_p^\delta\|_\cX^2 \right ).
	\end{equation}
	As we have 
	\begin{equation*}
	\la {r_p^\delta}(t,x), \phi^h \ra_{V'\!, V} = {a}(z_p^\delta(t,x), \phi^h) \leq {c_a} \|z_p^\delta(t,x)\|_V \|\phi^h\|_V \;\;\forall \phi^h \in V_h, \text{ a.e.}\, (t,x) \in \Omega_{t,x}
	\end{equation*}
	we can inflect that
	\begin{equation} \label{eq:est_f_p_d}
	\|{r_p^\delta}\|_{L^2(\Omega_{t,x},\Vhpr)} \leq {c_a} \|z_p^\delta\|_{\cX}.
	\end{equation}
	Therefore, we obtain analogously to \eqref{eq:new_b_final_est}, but using the discrete $\cYd$-norm,
	\begin{equation}
	\begin{aligned}
	\|w_p^\delta\|_\cX \|p^\delta\|_\cYd
	&=
	\|p^\delta + z_p^\delta\|_\cX
	\left (\|p^\delta\|_\cX^2 + \|{r_p^\delta}\|_{L^2(\Omega_{t,x},\Vhpr)}^2  \right )^{1/2} \\
	&\!\!\!\overset{\eqref{eq:est_f_p_d}}{\leq} 
	\left [ \|p^\delta + z_p^\delta\|_\cX^2 
	\left ( \|p^\delta \|_\cX^2 + {c_a^2}\|z_p^\delta \|_\cX^2 \right ) \right ]^{1/2}\\
	&\leq \left [ 2 \left ( \|p^\delta\|_\cX^2 + \|z_p^\delta\|_\cX^2 \right ) 
	\left ( \|p^\delta \|_\cX^2 + {c_a^2}\|z_p^\delta \|_\cX^2 \right ) \right ]^{1/2}\\
	&= \sqrt{2}\max\{1, {c_a}\} \left ( \|p^\delta \|_\cX^2 + \|z_p^\delta \|_\cX^2 \right ) \overset{\eqref{eq:bwpp_discr}}{\leq} \frac{\sqrt{2}\max\{1,{c_a}\}}{{\alpha_a}} b(w_p^\delta,p^\delta).
	\end{aligned}
	\end{equation}	
	This means that $b$ is inf-sup stable on the spaces $(\cXd,\|\cdot\|_{\cX}), (\cYd, \|\cdot\|_{\cYd})$ with constant $\beta_\delta \geq {\alpha_a} (\sqrt{2}\max\{1,{c_a}\})^{-1}$. Since for all $0\neq p^\delta$ it holds $b(w_p^\delta,p^\delta) > 0$ and thus $w_p^\delta \neq 0$, we have $\dim(\cXd) = \dim(\cYd)$. Therefore, inf-sup stability already guarantees well-posedness of the discrete problem \eqref{eq:Petrov-Galerkin}.
\end{proof}
\begin{remark}
	Due to the finite-dimensional spaces, the Petrov-Galerkin approximation $u^\delta \in \cXd$ is unique even if \cref{ass:global_trace} does not hold.
\end{remark}

{
	\begin{remark}[Choice of $\lambda_a$ in the case $\lambda_a > 0$] \label{remark:lambda_a}
		For possibly non-coercive problems, there is usually some flexibility in the choice of $\alpha_a$ and $\lambda_a$ such that the G\aa rding inequality \eqref{ass_a_gard} is fulfilled: On the one hand, if \eqref{ass_a_gard} holds for a specific $\lambda_a$, all $\tilde \lambda_a > \lambda_a$ are also possible. On the other hand, often \eqref{ass_a_gard} holds for all $\lambda_a > 0$ with different respective $\alpha_a > 0$; think, for instance, of $a(\psi,\theta) = (\nabla_v \psi, \nabla_v \theta)_{L^2(\Omega_v)}$, where \eqref{ass_a_gard} holds for any $\lambda_a > 0$ with $\alpha_a = \min(1,\lambda_a)$. When using a temporal transformation before the discretization, the constant $\lambda_a$ should not be too large: Since $e^{\lambda_a t}$ appears in the temporal transformation, a large $\lambda_a$ leads to error amplification and a very small effective inf-sup constant of the ``non-transformed'' discrete problem (cf. \cref{eq:inf-sup-lambda}). Therefore, a suitable balancing of $\lambda_a$ and $\alpha_a$ with possibly small $\lambda_a$ and large $\alpha_a$ should be sought to obtain a stable discretization when using the temporal transformation.  
	\end{remark}
}

\subsection{Efficient numerical scheme} \label{sec:comp}

Regarding the computational realization of the Petrov-Galerkin approximation, we have to take into account the specific choice of the discrete spaces according to \cref{eq:def_cXd}. To assemble the linear system and to represent the discrete solution, the functions $z_p^\delta$ defined by \cref{eq:def_zpdelta}, have to be computed for all basis functions of $\cYd$. 
We illustrate how this can be done very efficiently for the case where ${a}$ is coercive and has the separable form
\begin{equation}\label{eq:a_v_sep}
{a}((t,x),\phi,\psi) = d(t,x)  {\tilde a}(\phi,\psi),
\end{equation}
where $d \in L^\infty(\Omega_{t,x})$ satisfies $d(t,x)\geq \alpha^d > 0$ for a.e.\ $(t,x) \in \Omega_{t,x}$ and $ {\tilde a} : V \times V \to \R$ is a coercive bilinear form. {   }

To build the discrete test space, let first  $\bar \cYd^{\!t,x} \subset H^1(\Omega_{t,x})$ be a discrete space in the space-time domain with basis $(p^{t,x,\delta}_i(t,x))_{i=1}^{n_{t,x}}$ and let $V_h \subset V$ be the already defined velocity discrete space with basis $(\psi^h_j(v))_{j=1}^{n_v}$. Denoting the tensor product of these spaces by $\bar \cYd := \bar \cYd^{\!t,x} \otimes V_h$, we then set
\begin{equation*}
\cYd := \spanlin \{p_{i,j}^\delta = p^{t,x,\delta}_i \psi^h_j :  p_{i,j}^\delta|_{\Gamma_+} = 0 \} \subset \bar \cYd \cap \cY.
\end{equation*}
We may then use this tensor product structure to efficiently solve \cref{eq:def_zpdelta}: Fixing a basis function $p_{i,j}^\delta = p^{t,x,\delta}_i \psi^h_j$ of $\cYd$, the right-hand side of \cref{eq:def_zpdelta} reads
\begin{align*}
\la - {k} \cdot \nabla_{t,x} p_{i,j}^\delta(t,x), \phi^h  \ra_{V'\!,V} 
= &-\partial_{t} p^{t,x,\delta}_i(t,x) \!
\int_{\Omega_v}\!\!\! \psi^h_j(v) \phi^h(v) \dd v \\
&- \sum_{k = 1}^d
\partial_{x_k} p^{t,x,\delta}_i(t,x) \!
\int_{\Omega_v}\!\!\! v_k \psi^h_j(v) \phi^h(v) \dd v 
\end{align*}
for all $\phi^h \in V_h$, a.e.\ $ (t,x) \in \Omega_{t,x}$.
Using the separable form of ${a}$ \cref{eq:a_v_sep}, we can rewrite \cref{eq:def_zpdelta} as follows: Find $z_{i,j}^\delta := z_{p_{i,j}^\delta}^\delta \in \cX$, such that
\begin{align*}
d(t,x)  {\tilde a}(z_{i,j}^\delta(t,x),\phi^h)
&= - \partial_{t} p^{t,x,\delta}_i(t,x) 
\int_{\Omega_v}\!\!\! \psi^h_j(v) \phi^h(v) \dd v \\
&\quad - \sum_{k = 1}^d \partial_{x_k} p^{t,x,\delta}_i(t,x) 
\int_{\Omega_v}\!\!\! v_k \psi^h_j(v) \phi^h(v) \dd v \\
&\hspace{4cm}\forall \phi^h \in V_h, \text{ a.e.\ } (t,x) \in \Omega_{t,x}.
\end{align*}
Hence, the computation of all $z_{i,j}^\delta$ can be separated in the following way: We first compute the solutions $\rho_j^{1}, \rho_j^{v_1}, \dots, \rho_j^{v_d} \in V_h$ to the problems
\begin{equation}
\begin{split}
{\tilde a}(\rho_j^{1},\phi^h) &= \int_{\Omega_v}\!\!\! \psi_j^h(v) \phi^h(v) \dd v, \quad \forall \phi^h \in V_h, \\
{\tilde a}(\rho_j^{v_k},\phi^h) &= \int_{\Omega_v}\!\!\! v_k \psi_j^h(v) \phi^h(v) \dd v, \quad \forall \phi^h \in V_h, k=1,\dots, d,
\end{split}
\end{equation}
for all basis functions $\psi_j^h \in V_h, j=1,\dots,n_v$. 
Then, the $z_{i,j}^\delta$ are given by
\begin{equation}\label{eq:zpdelta_composed}
z_{i,j}^\delta(t,x,v) = - d(t,x)^{-1} \left ( \partial_{t} p^{t,x,\delta}_i(t,x) \rho_j^{1}(v) + \sum_{k = 1}^d \partial_{x_k} p^{t,x,\delta}_i(t,x) \rho_j^{v_k}(v) \right ).
\end{equation}
The full solution process thus consists of the following steps:
\begin{enumerate}
	\item Precompute $\rho_j^1, \rho_j^{v_k}$, i.e., solve $(d+1)\times n_v$ problems of size $n_v$, which can be done in parallel.
	\item Assemble the stiffness matrix $[b(p_{i,j}^\delta+z_{i,j}^\delta, p_{k,l}^{\delta})]_{(k,l),(i,j)}$, using \cref{eq:zpdelta_composed}, and assemble the load vector $[f(p_{k,l}^{\delta})]_{(k,l)}$.
	\item Solve the linear system of equations to obtain the coefficient vector $[u_{i,j}]_{(i,j)}$.
	\item Compose the solution $u^\delta = \sum_{i,j} u_{i,j} (p_{i,j}^\delta+z_{i,j}^\delta) \in \cXd$ by again using  \cref{eq:zpdelta_composed} { for $z_{i,j}^\delta$.}
\end{enumerate}
Compared to using finite element spaces without any stabilization, the additional costs thus only lie in the $n_v$-sized problems (step 1) and possibly more nonzero elements in the stiffness matrix. 
These effects only depend on the dimension $n_v$ of $V_h$. Therefore, the proposed discretization strategy is especially well-suited for using specific spaces $V_h$ of low dimension, which can be achieved for example by using polynomial bases or a hierarchical model reduction approach as proposed in \cite{BLOS16}. 

In order to efficiently compute the problem-dependent basis functions, we heavily rely on the separable form of the bilinear form ${a}$ given in \eqref{eq:a_v_sep}, { which is unfortunately often not fulfilled for realistic data.
	For general bilinear forms, \eqref{eq:def_zpdelta} remains a variational problem in all dimensions that is not directly decomposable in single low-dimensional problems. 
	However, as the velocity operator is elliptic, for realistic data functions we usually expect the problem to be well-suited for model reduction strategies.
	Therefore, it might be possible to use low-rank approximations as done in a related setting in \cite{BiNoZa14} to find sufficiently accurate approximate solutions to \eqref{eq:def_zpdelta} in a computationally efficient manner.}

More generally, due to the high-dimensionality of the problem, it is especially desirable to combine the {approach proposed in this manuscript} with further approximations as the already mentioned hierarchical model reduction \cite{BLOS16} or tensor-based methods that have already been used in similar Petrov-Galerkin settings \cite{BiNoZa14,HPSU2019} and to discretize kinetic equations like the radiative transfer equation \cite{GreSch2011,WHS2008} or the Vlasov equation \cite{EhrLom2017,EinLub2018,Kormann2015}.

\section{Numerical experiments} \label{sect:num_exp}

We investigate the properties of the method developed in \cref{sect:discretization} by implementing the discretization { for the Fokker-Planck equation \cref{eq:fp_intro} on a two-dimensional spatial domain as well as for a modified stationary equation.} We are especially interested in { the convergence of the discretization error,} analyzing how sharp the lower bound for the inf-sup constant is and examining the efficiency in light of the nonstandard discrete spaces . The source code to reproduce all results is provided in \cite{Bru2020zenodo}.

\subsection{Test Cases}

{
	Let $\Omega_x = (0,1)^2 \subset \R^2$ be the spatial domain and $I_t = (0,0.75)$ be the time interval. We parametrize $\Omega_v = S^1$ by the angle $\varphi \in [0,2\pi)$, leading to $v = \tcolvec{\cos \varphi \\ \sin \varphi}$ and $\Delta_{v} u = \frac{\partial^2}{\partial \varphi^2}u$.
	
	We consider the Fokker-Planck equation \eqref{strong_form_with_q} for a constant $q \in \R_{+}$. Then, the equation reads
	\begin{equation}\label{eq:test_case_timdep}
	\begin{aligned}
	\partial_t u((t,x),\varphi) + \tcolvec{\cos \varphi \\ \sin \varphi} \cdot \nabla_x u((t,x),\varphi)  &= q^{-1} \tfrac{\partial^2}{\partial \varphi^2} u((t,x),\varphi)&& \text{in } \Omega, \\
	u((t,x),\varphi) &= g((t,x),\varphi) && \text{on } \Gamma_{\!-},\hspace{-1em}
	\end{aligned}
	\end{equation}
	where we choose the initial condition
	\begin{align*}
	g((0,x),\varphi) &:= 
	\begin{cases}
	\tfrac{1}{2\pi} (128r(x)^3 - 48r(x)^2 + 1),  &r(x) < \tfrac{1}{4}, \\
	0, & r(x) \geq \tfrac{1}{4},
	\end{cases} 
	\end{align*}
	with $r(x_1,x_2) := \sqrt{(0.5-x_1)^2 + (0.5-x_2)^2}$ 
	and zero spatial inflow boundary conditions $g|_{\Gamma_-^x(\varphi)} \equiv 0$ for all $\varphi \in [0,2\pi)$.
	
	The corresponding velocity bilinear form
	\begin{equation*}
	a(\psi,\rho) := q^{-1} \int_0^{2\pi} \psi'(\varphi) \rho'(\varphi) \dd \varphi \quad \forall \psi,\rho \in V = H^1(\Omega_v)
	\end{equation*}
	fulfills the G\aa rding inequality \eqref{ass_a_gard} for any $\lambda_a > 0$ with $\alpha_a = \min(q^{-1},\lambda_a)$. As mentioned in \cref{remark:lambda_a}, a choice with possibly large $\alpha_a$ and possibly small $\lambda_a$ is desirable to obtain good results when using a temporal transformation according to \eqref{eq:temp_transf}.
	We only consider cases where $0.1 \leq q^{-1} \leq 1$, therefore  we select $\lambda_a= q^{-1}$, $\alpha_a= q^{-1}$. Then, we discretize the transformed problem, where the transformed velocity bilinear form  coincides with the scaled $V$-scalar product, i.e., $\hat a = q^{-1}(\cdot,\cdot)_{V}$. 
	
	For the discretization we choose $V_h \subset V$
	as the continuous linear FE space on $[0,2\pi)$ with periodic boundary condition and uniform mesh with size $h_v=2\pi/n_v$. The space $\bar \cYd^{\!t,x} \subset H^1(\Omega_{t,x})$ is chosen as the continuous $\mathbb{Q}_2$ FE space on a 3D rectangular mesh with uniform 1D mesh sizes $h_t=0.75/n_t$ and $h_{x_1}=h_{x_2} = 1/n_x$. 
	The trial space $\cXd$ is computed as described in \cref{sec:comp} by first solving $3 n_v$ problems of dimension $n_v$. From the definition we see that  $\cXd \subset \bar \cXd^{\!t,x} \otimes V_h$, with $\cXd^{\!t,x} \subset L^2(\Omega_{t,x})$ being the respective discontinuous $\mathbb{Q}_2$ FE space. 
	After computing the transformed solution $\hat u^\delta \in \cXd$, we obtain the discrete solution to \eqref{eq:test_case_timdep} by setting $u^\delta := e^{t} \hat u^\delta$. 
	
	To investigate the convergence rate of the newly proposed scheme, we additionally consider a stationary (and thus lower-dimensional) problem with a manufactured solution $u(x_1,x_2,\varphi) = \sin^2(\pi x_1) \sin^2(\pi x_2) \sin^2(\varphi)$ and corresponding right-hand side $f_0$; therefore slightly deviating from the original problem. More precisely, we consider
	\begin{equation}\label{eq:test_case_time_ind}
	\tcolvec{\cos \varphi \\ \sin \varphi} \cdot \nabla _x u(x,\varphi) + c\, u(x,\varphi) = d\,\tfrac{\partial^2}{\partial \varphi^2} u(x,\varphi) + f_0(x,\varphi) \quad \text{in } \Omega_x \times \Omega_v
	\end{equation}
	with reaction and velocity diffusion constants $c,d \in \R$, $c,d>0$ and zero inflow boundary conditions on $\Gamma_- \subset \partial \Omega_x \times \Omega_v$.
	Note that we require $c>0$ here in order to obtain a coercive bilinear form 
	${a}: V \times V \to \R$
	\begin{equation*}
	{a}(\psi,\rho) = \int_0^{2\pi}\!\!\! d\, \psi'(\varphi) \rho'(\varphi) + c\, \psi(\varphi) \rho(\varphi) \dd \varphi \quad \forall \psi,\rho \in V.
	\end{equation*}
	Then, the bilinear form $a$ is coercive with constant ${\alpha_a} = \min(c,d) > 0$ and continuous with constant $\gamma_v = \max(c,d)$. The variational formulation for the stationary equation \eqref{eq:test_case_time_ind} is based on $\cX_{\text{st}} := L^2(\Omega_{x};H^1(\Omega_v))$, and $\cY_{\text{st}} = \clos_{\|\cdot \|_{\cY_{\text{st}}}} \{w \in C^1(\Omega_x \times \Omega_v): w=0 \text{ on } \Gamma_{\text{st},+} \}$, where 
	\begin{align*}
	\Gamma_{\text{st},+} &= \{(x,v) \in \partial \Omega_x \times \Omega_v :  \tcolvec{\cos \varphi \\ \sin \varphi} \cdot n_x > 0\}, \\
	\|w\|_{\cY_{\text{st}}}^2 &= \|w\|_{\cX_{\text{st}}}^2 + \|\tcolvec{\cos \varphi \\ \sin \varphi} \cdot \nabla_x w \|_{\cX_{\text{st}}'}^2.
	\end{align*} 
	The space-velocity bilinear form is
	\begin{align*}
	b_{\text{st}}(w,p) := \int_{\Omega_{x}}\!\!\!  \la  w(x), - \tcolvec{\cos \\ \sin } \cdot \nabla_x p(x) \ra_{V,V'} 
	+ {a}(w(x),p(x)) \dd x, \quad \forall w \in \cX_{\text{st}}, p \in \cY_{\text{st}}
	\end{align*}
	and the functional describing the source term is defined as
	\begin{equation*}
	f_{\text{st}}(p) := \int_{\Omega_x} \int_0^{2\pi} f_0(x,\varphi) p(x,\varphi) \dd \varphi \dd x \quad \forall p \in \cY_{\text{st}}.
	\end{equation*}
	
	Well-posedness of the weak formulation of \cref{eq:test_case_time_ind} follows completely analogously to the time-dependent case, as ${a}$ is coercive and $f_{\text{st}} \in \cY_{\text{st}}'$. 
	As in the time-dependent case, we choose $V_h$ as linear FE space and $\bar \cYd^{\!x} \subset H^1(\Omega_x)$ as continuous $\mathbb{Q}_2$ FE space on a 2D uniform rectangular mesh.
}

\subsection{Numerical results} 

{
	We first compute the discrete solution to \eqref{eq:test_case_timdep} for $q^{-1}=0.8$ and $h_v=h_{x_1}=h_{x_2}=h_t=1/16$. The assembly of the system matrices which includes the computation of the $\cXd$ basis functions as described in \cref{sec:comp} takes up about 11\% of the computational time in our experiments. Hence, the additional low-dimensional problems in $V_h$ are not dominant in the computational costs. 
	In \cref{fig:plots_FP}, plots of the solution are shown, where we see that the dynamics of the solution are captured well and that no instabilities or oscillations occur. 
}

\begin{figure}
	\vspace{-1em}
	\begin{minipage}{0.27\linewidth}
		%
		\adjincludegraphics[width=\linewidth, trim={0.15\width} 0 {0.1\width} {0.1\height} , clip]{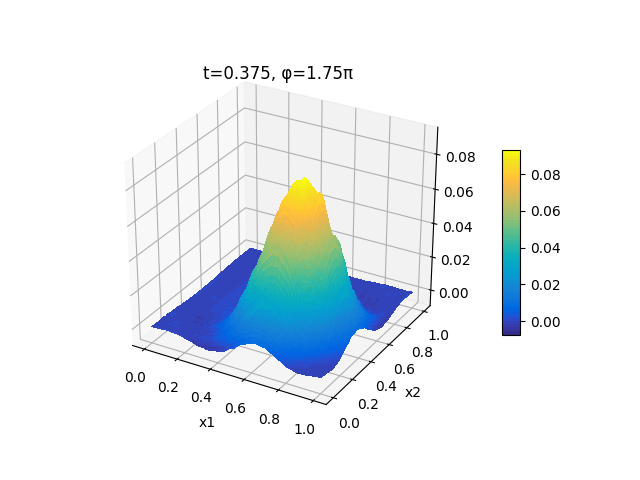}%
		
		\adjincludegraphics[width=\linewidth, trim={0.15\width} 0 {0.1\width} {0.1\height} , clip]{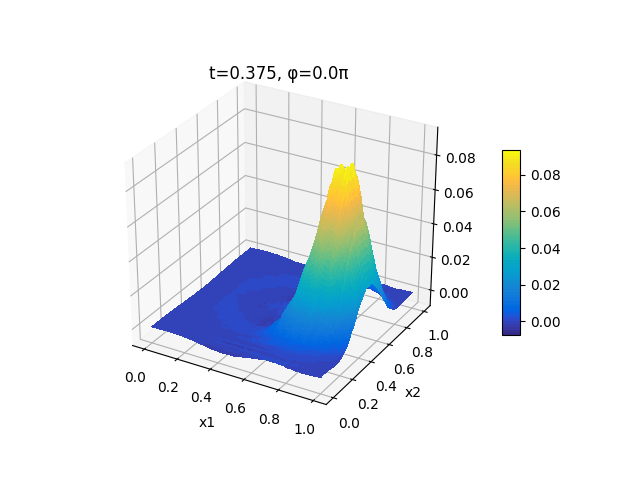}
	\end{minipage}
	\hfill
	\begin{minipage}{0.72\linewidth}%
		
		\vspace{0pt}%
		\hspace{0.09\linewidth}
		\adjincludegraphics[width=0.3\linewidth, trim={0.15\width} 0 {0.25\width} {0.1\height}  , clip]{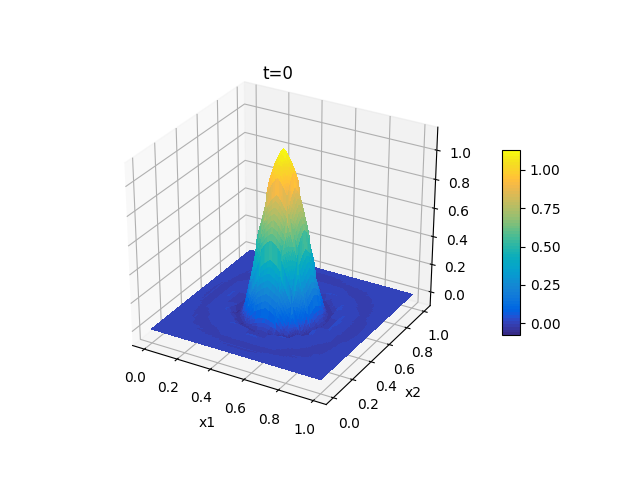}%
		\adjincludegraphics[width=0.3\linewidth, trim={0.15\width} 0 {0.25\width} {0.1\height}  , clip]{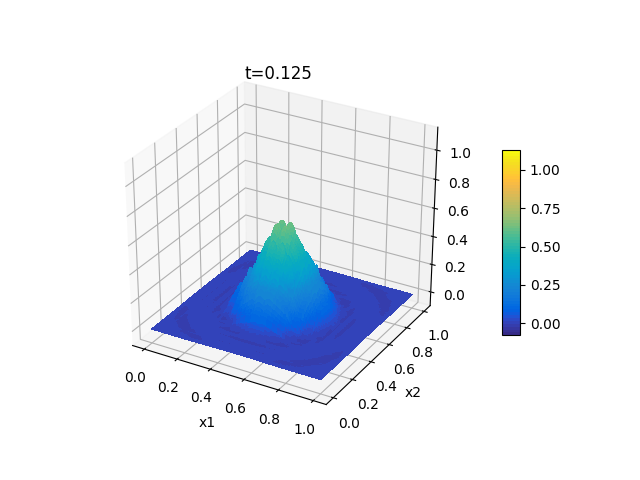}%
		\adjincludegraphics[width=0.3\linewidth, trim={0.15\width} 0 {0.25\width} {0.1\height}  , clip]{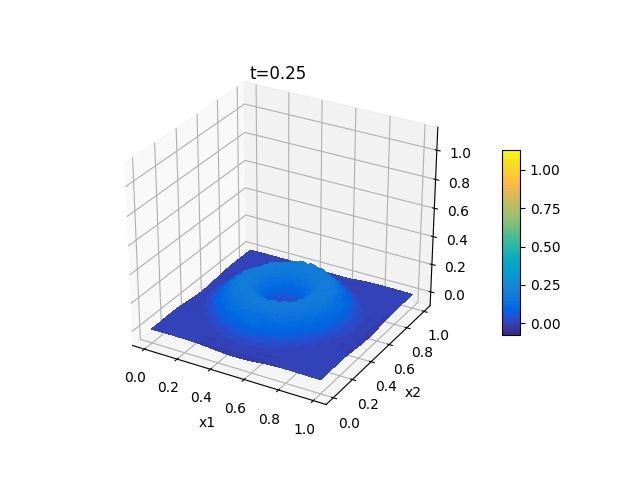}
		
		{
			\hspace{0.21\linewidth}
			\adjincludegraphics[width=0.3\linewidth, trim={0.15\width} 0 {0.25\width} {0.1\height}  , clip]{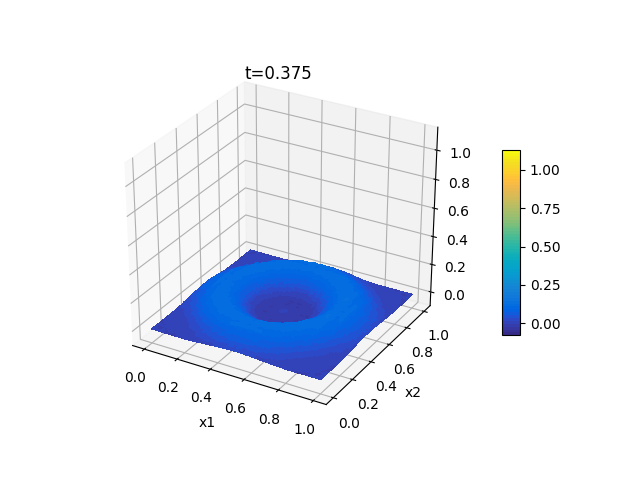}%
			\adjincludegraphics[width=0.383\linewidth, trim={0.15\width} 0 {0.1\width} {0.1\height}  , clip]{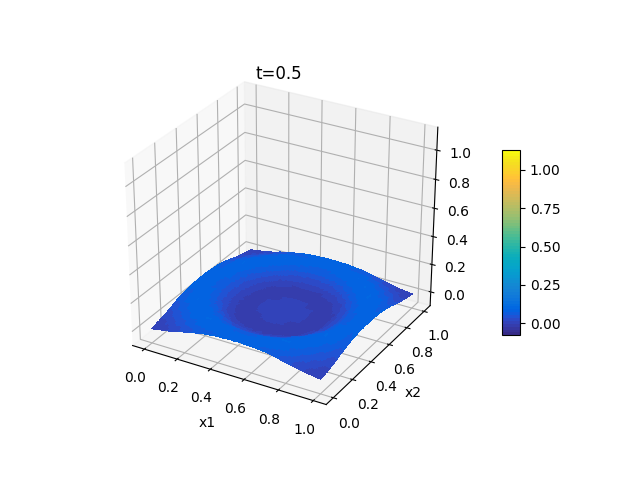}
		}
	\end{minipage}	
	\caption{ Plots of the solution $u^\delta$ of \eqref{eq:test_case_timdep} with $q^{-1} = 0.8$
		for $h_v=h_{x_1}=h_{x_2}=h_t=1/16$. Left: Solution for fixed $t=0.375$ and $\varphi=1.75\pi$ (upper) and $\varphi=0$ (lower). Right: Spatial density, i.e., moment $\int_{0}^{2\pi}u(\cdot,\cdot,\varphi)d\varphi$ for different $t$.
		\label{fig:plots_FP}}
	\vspace{-1em}
\end{figure}
{
	To investigate whether the estimate for the discrete inf-sup constant from \cref{sect:discretization} is sharp, we compute the constants for the transformed problem with $\hat a(\cdot,\cdot)=q^{-1}(\cdot,\cdot)_V$ for different $q^{-1}$ and different mesh sizes. In \cref{table:inf_sup_timdep}, we show the evaluated constants in relation to the lower bound \eqref{eq:discrete_inf_sup}, which is given for this test case as $q^{-1}/(\sqrt{2}\max(1,q^{-1}))$. We see that the estimate is sharp up to a factor of about $\sqrt{2}$.
}

\begin{table}
	\caption{Discretization of \cref{eq:test_case_timdep}: Computed discrete inf-sup constants $\beta_\delta$ of the transformed problem in relation to the respective lower bound $\beta_{\text{lb}}$ for varying mesh sizes with $n=1/{h_{x_1}} = 1/{h_{x_2}} = 1/h_t = 2\pi/h_v$.}
	\label{table:inf_sup_timdep}
	\begin{center}
		{\footnotesize 
			\begin{tabular}{|c|l|l||l|l||l|l|}\hline
				& \multicolumn{2}{c||}{$q^{-1}=0.8$} &  \multicolumn{2}{c||}{$q^{-1}=0.4$} & \multicolumn{2}{c|}{$q^{-1}=0.1$} \\ \cline{2-7}
				$n$ & \multicolumn{1}{c|}{$\beta_\delta$}
				& \multicolumn{1}{c||}{$\beta_\delta/\beta_{\text{lb}}\!$}
				& \multicolumn{1}{c|}{$\beta_\delta$}
				& \multicolumn{1}{c||}{$\beta_\delta/\beta_{\text{lb}}\!$}
				& \multicolumn{1}{c|}{$\beta_\delta$}
				& \multicolumn{1}{c|}{$\beta_\delta/\beta_{\text{lb}}\!$} \\
				\hline 
				4 & 0.8878
				& 1.569
				& 0.6418
				& 2.269
				& 0.45005
				& 6.365
				\\
				\hline 
				8 & 0.81141
				& 1.434
				& 0.44126
				& 1.56
				& 0.18668
				& 2.64
				\\ 
				\hline 
				$\!16\!$ & 0.80072
				& 1.415
				& 0.40317
				& 1.425
				& 0.11112
				& 1.573
				\\
				\hline 
			\end{tabular}
		}
	\end{center}

\end{table}


{
	To examine the convergence behavior of our scheme, we compute discrete solutions to \eqref{eq:test_case_time_ind}, where the exact solution is known. We compare the discretization errors for different mesh sizes in the $L^2(\Omega_x \times \Omega_v)$ norm as well as in the $\cX_{\text{st}}$ norm in \cref{Figure:Discretization_errors}.
	We see that the $L^2$-error converges with second order in both $h_{x_1}=h_{x_2}=1/n_x$ and $h_v = 2\pi/n_v$. The $\cX_{\text{st}}$-error, which includes the $L^2$-norm of the $v$-derivative, converges with second order in $h_{x_i}$ and first order in $h_v$. 
}
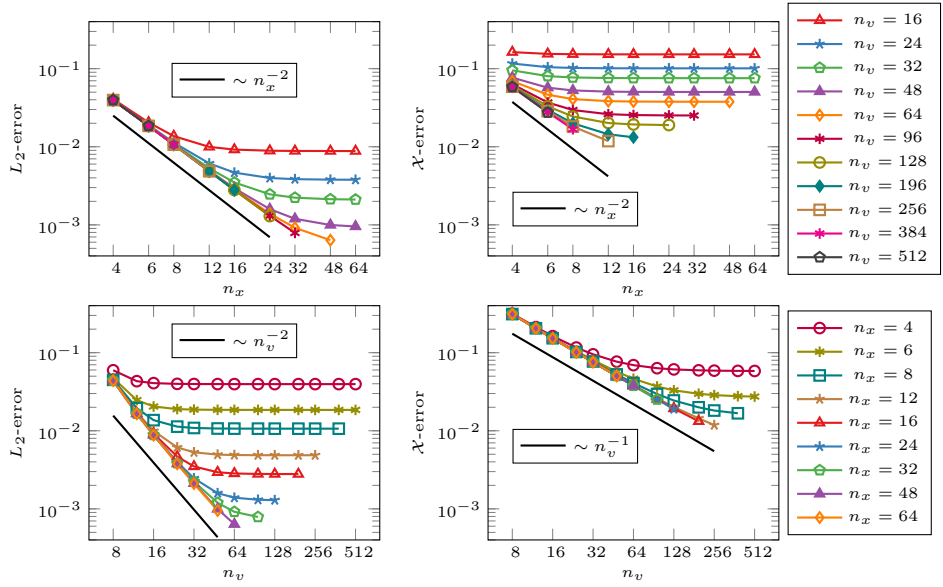
\begin{figure}		
	\tiny{
		\begin{tikzpicture}
		\pgfplotsset{
			log x ticks with fixed point/.style={
				xticklabel={
					\pgfkeys{/pgf/fpu=true}
					\pgfmathparse{exp(\tick)}%
					\pgfmathprintnumber[fixed relative, precision=3]{\pgfmathresult}
					\pgfkeys{/pgf/fpu=false}
				}
			},
			log y ticks with fixed point/.style={
				yticklabel={
					\pgfkeys{/pgf/fpu=true}
					\pgfmathparse{exp(\tick)}%
					\pgfmathprintnumber[fixed relative, precision=3]{\pgfmathresult}
					\pgfkeys{/pgf/fpu=false}
				}
			}
		}
		\definecolor{c1}{RGB}{228,26,28}
		\definecolor{c2}{RGB}{55,126,184}
		\definecolor{c3}{RGB}{77,175,74}
		\definecolor{c4}{RGB}{152,78,163}
		\definecolor{c5}{RGB}{255,127,0}
		\begin{loglogaxis}[width=155pt, legend style={at={(1.03,0.5)}, anchor=west}, xlabel=$n_x$, ylabel=$L_2$-error, xtick={4, 6, 8, 12, 16, 24, 32, 48, 64}, log x ticks with fixed point, ymin=0.0004, ymax=0.4]
		\addplot [color=c1, mark=triangle, thick] table [xlabel={nv}, y index=3, col sep=comma]{L2_errors_transposed.csv};
		%
		\addplot [color=c2, mark=star, thick] table [xlabel={nv}, y index=4, col sep=comma]{L2_errors_transposed.csv};
		%
		\addplot [color=c3, mark=pentagon, thick] table [xlabel={nv}, y index=5, col sep=comma]{L2_errors_transposed.csv};
		%
		\addplot [color=c4, mark=triangle*, thick] table [xlabel={nv}, y index=6, col sep=comma]{L2_errors_transposed.csv};
		%
		\addplot [color=c5, mark=diamond, thick] table [xlabel={nv}, y index=7, col sep=comma]{L2_errors_transposed.csv};
		%
		\addplot [color=purple, mark=asterisk, thick] table [xlabel={nv}, y index=8, col sep=comma]{L2_errors_transposed.csv};
		\addplot [color=olive, mark=o, thick] table [xlabel={nv}, y index=9, col sep=comma]{L2_errors_transposed.csv};
		\addplot [color=teal, mark=diamond*, thick] table [xlabel={nv}, y index=10, col sep=comma]{L2_errors_transposed.csv};
		\addplot [color=brown, mark=square, thick] table [xlabel={nv}, y index=11, col sep=comma]{L2_errors_transposed.csv};
		\addplot [color=magenta, mark=asterisk, thick] table [xlabel={nv}, y index=12, col sep=comma]{L2_errors_transposed.csv};
		\addplot [color=darkgray, mark=pentagon, thick] table [xlabel={nv}, y index=13, col sep=comma]{L2_errors_transposed.csv};
		\addplot[black, thick, domain=4:24, samples=200,]{0.4*x^(-2)}; \label{ord_2}
		\node [draw,fill=white,font=\tiny] at (rel axis cs: 0.5,0.75) {\shortstack[l]{
				\ref{ord_2} $\sim n_x^{-2}$}};
		\end{loglogaxis}
		\end{tikzpicture}%
		\hspace{1em}	
		\begin{tikzpicture}
		\pgfplotsset{
			log x ticks with fixed point/.style={
				xticklabel={
					\pgfkeys{/pgf/fpu=true}
					\pgfmathparse{exp(\tick)}%
					\pgfmathprintnumber[fixed relative, precision=3]{\pgfmathresult}
					\pgfkeys{/pgf/fpu=false}
				}
			},
			log y ticks with fixed point/.style={
				yticklabel={
					\pgfkeys{/pgf/fpu=true}
					\pgfmathparse{exp(\tick)}%
					\pgfmathprintnumber[fixed relative, precision=3]{\pgfmathresult}
					\pgfkeys{/pgf/fpu=false}
				}
			}
		}
		\definecolor{c1}{RGB}{228,26,28}
		\definecolor{c2}{RGB}{55,126,184}
		\definecolor{c3}{RGB}{77,175,74}
		\definecolor{c4}{RGB}{152,78,163}
		\definecolor{c5}{RGB}{255,127,0}
		\begin{loglogaxis}[width=155pt, legend style={at={(1.03,0.5)}, anchor=west}, xlabel=$n_x$, ylabel=$\cX$-error, xtick={4, 6, 8, 12, 16, 24, 32, 48, 64}, log x ticks with fixed point, ymin=0.0004, ymax=0.4]
		\addplot [color=c1, mark=triangle, thick] table [xlabel={nv}, y index=3, col sep=comma]{X_errors_transposed.csv};
		\addlegendentry{$n_v=16$}
		\addplot [color=c2, mark=star, thick] table [xlabel={nv}, y index=4, col sep=comma]{X_errors_transposed.csv};
		\addlegendentry{$n_v=24$}
		\addplot [color=c3, mark=pentagon, thick] table [xlabel={nv}, y index=5, col sep=comma]{X_errors_transposed.csv};
		\addlegendentry{$n_v=32$}
		\addplot [color=c4, mark=triangle*, thick] table [xlabel={nv}, y index=6, col sep=comma]{X_errors_transposed.csv};
		\addlegendentry{$n_v=48$}
		\addplot [color=c5, mark=diamond, thick] table [xlabel={nv}, y index=7, col sep=comma]{X_errors_transposed.csv};
		\addlegendentry{$n_v=64$}
		\addplot [color=purple, mark=asterisk, thick] table [xlabel={nv}, y index=8, col sep=comma]{X_errors_transposed.csv};
		\addlegendentry{$n_v=96$}
		\addplot [color=olive, mark=o, thick] table [xlabel={nv}, y index=9, col sep=comma]{X_errors_transposed.csv};
		\addlegendentry{$n_v=128$}
		\addplot [color=teal, mark=diamond*, thick] table [xlabel={nv}, y index=10, col sep=comma]{X_errors_transposed.csv};
		\addlegendentry{$n_v=196$}
		\addplot [color=brown, mark=square, thick] table [xlabel={nv}, y index=11, col sep=comma]{X_errors_transposed.csv};
		\addlegendentry{$n_v=256$}
		\addplot [color=magenta, mark=asterisk, thick] table [xlabel={nv}, y index=12, col sep=comma]{X_errors_transposed.csv};
		\addlegendentry{$n_v=384$}
		\addplot [color=darkgray, mark=pentagon, thick] table [xlabel={nv}, y index=13, col sep=comma]{X_errors_transposed.csv};
		\addlegendentry{$n_v=512$}
		\addplot[black, thick, domain=4:12, samples=200,]{0.6*x^(-2)};
		\label{ord_2}
		\node [draw,fill=white,font=\tiny] at (rel axis cs: 0.3,0.2) {\shortstack[l]{
				\ref{ord_2} $\sim n_x^{-2}$}};
		\end{loglogaxis}
		\end{tikzpicture}
		\hfill
	}
	
	\tiny{
		\begin{tikzpicture}
		\pgfplotsset{
			log x ticks with fixed point/.style={
				xticklabel={
					\pgfkeys{/pgf/fpu=true}
					\pgfmathparse{exp(\tick)}%
					\pgfmathprintnumber[fixed relative, precision=3]{\pgfmathresult}
					\pgfkeys{/pgf/fpu=false}
				}
			},
			log y ticks with fixed point/.style={
				yticklabel={
					\pgfkeys{/pgf/fpu=true}
					\pgfmathparse{exp(\tick)}%
					\pgfmathprintnumber[fixed relative, precision=3]{\pgfmathresult}
					\pgfkeys{/pgf/fpu=false}
				}
			}
		}
		\definecolor{c1}{RGB}{228,26,28}
		\definecolor{c2}{RGB}{55,126,184}
		\definecolor{c3}{RGB}{77,175,74}
		\definecolor{c4}{RGB}{152,78,163}
		\definecolor{c5}{RGB}{255,127,0}
		\begin{loglogaxis}[width=155pt, legend style={at={(1.03,0.5)}, anchor=west}, xlabel=$n_v$, ylabel=$L_2$-error, xtick={8, 16, 32, 64, 128, 256, 512}, log x ticks with fixed point, ymin=0.0004, ymax=0.4]
		\addplot [color=purple, mark=o, thick] table [xlabel={nv}, y index=1, col sep=comma]{L2_errors_as_table.csv};
		%
		\addplot [color=olive, mark=asterisk, thick] table [xlabel={nv}, y index=2, col sep=comma]{L2_errors_as_table.csv};
		%
		\addplot [color=teal, mark=square, thick] table [xlabel={nv}, y index=3, col sep=comma]{L2_errors_as_table.csv};
		%
		\addplot [color=brown, mark=star, thick] table [xlabel={nv}, y index=4, col sep=comma]{L2_errors_as_table.csv};
		%
		\addplot [color=c1, mark=triangle, thick] table [xlabel={nv}, y index=5, col sep=comma]{L2_errors_as_table.csv};
		%
		\addplot [color=c2, mark=star, thick] table [xlabel={nv}, y index=6, col sep=comma]{L2_errors_as_table.csv};
		%
		\addplot [color=c3, mark=pentagon, thick] table [xlabel={nv}, y index=7, col sep=comma]{L2_errors_as_table.csv};
		%
		\addplot [color=c4, mark=triangle*, thick] table [xlabel={nv}, y index=8, col sep=comma]{L2_errors_as_table.csv};
		%
		\addplot [color=c5, mark=diamond, thick] table [xlabel={nv}, y index=9, col sep=comma]{L2_errors_as_table.csv};
		%
		\addplot[black, thick, domain=8:48, samples=200,]{x^(-2)};\label{ord_2}
		\node [draw,fill=white,font=\tiny] at (rel axis cs: 0.5,0.85) {\shortstack[l]{
				\ref{ord_2} $\sim n_v^{-2}$}};
		\end{loglogaxis}
		\end{tikzpicture}%
		\hspace{1em}
		\begin{tikzpicture}
		\pgfplotsset{
			log x ticks with fixed point/.style={
				xticklabel={
					\pgfkeys{/pgf/fpu=true}
					\pgfmathparse{exp(\tick)}%
					\pgfmathprintnumber[fixed relative, precision=3]{\pgfmathresult}
					\pgfkeys{/pgf/fpu=false}
				}
			},
			log y ticks with fixed point/.style={
				yticklabel={
					\pgfkeys{/pgf/fpu=true}
					\pgfmathparse{exp(\tick)}%
					\pgfmathprintnumber[fixed relative, precision=3]{\pgfmathresult}
					\pgfkeys{/pgf/fpu=false}
				}
			}
		}
		\definecolor{c1}{RGB}{228,26,28}
		\definecolor{c2}{RGB}{55,126,184}
		\definecolor{c3}{RGB}{77,175,74}
		\definecolor{c4}{RGB}{152,78,163}
		\definecolor{c5}{RGB}{255,127,0}
		\begin{loglogaxis}[width=155pt, legend style={at={(1.03,0.5)}, anchor=west}, xlabel=$n_v$, ylabel=$\cX$-error, xtick={8, 16, 32, 64, 128, 256, 512}, log x ticks with fixed point, ymin=0.0004, ymax=0.4]
		\addplot [color=purple, mark=o, thick] table [xlabel={nv}, y index=1, col sep=comma]{X_errors_as_table.csv};
		\addlegendentry{$n_x=4$}
		\addplot [color=olive, mark=asterisk, thick] table [xlabel={nv}, y index=2, col sep=comma]{X_errors_as_table.csv};
		\addlegendentry{$n_x=6$}
		\addplot [color=teal, mark=square, thick] table [xlabel={nv}, y index=3, col sep=comma]{X_errors_as_table.csv};
		\addlegendentry{$n_x=8$}
		\addplot [color=brown, mark=star, thick] table [xlabel={nv}, y index=4, col sep=comma]{X_errors_as_table.csv};
		\addlegendentry{$n_x=12$}
		\addplot [color=c1, mark=triangle, thick] table [xlabel={nv}, y index=5, col sep=comma]{X_errors_as_table.csv};
		\addlegendentry{$n_x=16$}
		\addplot [color=c2, mark=star, thick] table [xlabel={nv}, y index=6, col sep=comma]{X_errors_as_table.csv};
		\addlegendentry{$n_x=24$}
		\addplot [color=c3, mark=pentagon, thick] table [xlabel={nv}, y index=7, col sep=comma]{X_errors_as_table.csv};
		\addlegendentry{$n_x=32$}
		\addplot [color=c4, mark=triangle*, thick] table [xlabel={nv}, y index=8, col sep=comma]{X_errors_as_table.csv};
		\addlegendentry{$n_x=48$}
		\addplot [color=c5, mark=diamond, thick] table [xlabel={nv}, y index=9, col sep=comma]{X_errors_as_table.csv};
		\addlegendentry{$n_x=64$}
		\addplot[black, thick, domain=8:256, samples=200,]{1.4*x^(-1)}; \label{ord_1}
		\node [draw,fill=white,font=\tiny] at (rel axis cs: 0.3,0.4) {\shortstack[l]{
				\ref{ord_1} $\sim n_v^{-1}$}};
		\end{loglogaxis}
		\end{tikzpicture}%
		\hfill
	}		
	\caption{Discretization of \eqref{eq:test_case_time_ind} with $d=0.1$, $c=0.1$.
		Left: $L^2$-errors $\|u - u_\delta\|_{L^2(\Omega_{x}\times\Omega_v)}$, right: $\cX_{\mathrm{st}}$-errors $\|u - u_\delta\|_{L^2(\Omega_{x}, V)}$. Upper plots: convergence in $n_x = 1/h_{x_1}=1/h_{x_2}$ for different fixed $n_v = 2\pi/h_v$. Lower plots: Convergence in $n_v$ for different fixed $n_x$.}
	\label{Figure:Discretization_errors}
	\vspace{-1em}
\end{figure}
{
	For a further investigation of the estimate for the discrete inf-sup constant 
	we compute the constants for the discretization of \eqref{eq:test_case_time_ind} for different mesh sizes and reaction and diffusion constants $c$ and $d$; see \cref{table:discrete_inf_sup}. The estimate {\eqref{eq:discrete_inf_sup}} is given here as $\beta_\delta \geq \min\{c,d\} /(\sqrt{2}\max\{1, c, d\})$, which is $\min\{c,d\}/\sqrt{2}$ for all considered data values in \cref{table:discrete_inf_sup}. As can be seen in the table, the estimate is here again sharp up to a factor of about $\sqrt{2}$.
}
\begin{table}[tb]	
	\begin{minipage}{0.69\textwidth} \centering
		\captionsetup{width=0.95\linewidth}
		\captionof{table}{Discretization of \cref{eq:test_case_time_ind}: Computed discrete inf-sup constants $\beta_\delta$ in relation to the lower bound $\beta_{\text{lb}}$ for varying mesh sizes with $n=1/{h_{x_1}} = 1/{h_{x_2}} = 2\pi/h_v$ and varying values for the constants $d$ and $c$.}\label{table:discrete_inf_sup}
		{\footnotesize
			\begin{tabular}{|c|l|l||l|l||l|l|}\hline
				& \multicolumn{2}{c||}{$d=0.4$, $c=1$} &  \multicolumn{2}{c||}{$d=0.1$, $c=1$} & \multicolumn{2}{c|}{$d=0.1$, $c=0.1$} \\ \cline{2-7}
				$n$ & \multicolumn{1}{c|}{$\beta_\delta$}
				& \multicolumn{1}{c||}{$\beta_\delta/\beta_{\text{lb}}\!$}
				& \multicolumn{1}{c|}{$\beta_\delta$}
				& \multicolumn{1}{c||}{$\beta_\delta/\beta_{\text{lb}}\!$}
				& \multicolumn{1}{c|}{$\beta_\delta$}
				& \multicolumn{1}{c|}{$\beta_\delta/\beta_{\text{lb}}\!$} \\
				\hline 
				4 & 0.61855& 2.187
				& 0.41087& 5.811
				& 0.30579& 4.324
				\\
				\hline 
				8 & 0.44891 & 1.587
				& 0.18628 & 2.634
				& 0.14924& 2.111
				\\ 
				\hline 
				$\!16\!$ & 0.40915 & 1.447
				& 0.11688 & 1.653
				&0.10585 & 1.497
				\\
				\hline 
				$\!32\!$ & 0.40202 & 1.421
				& 0.1033 & 1.461
				& 0.10041 & 1.42
				\\ 
				\hline 
				$\!48\!$ & 0.40088 & 1.417
				& 0.10137 & 1.434
				&0.10008 & 1.415
				\\ 
				\hline 
			\end{tabular}
		}	
	\end{minipage}
	\hfill
	\begin{minipage}{0.3\textwidth}
		\captionsetup{width=.99\linewidth}
		\captionof{table}{Ratio of nonzero elements in the stiffness matrix for varying mesh sizes}
		\label{table:sparsity}
		{\footnotesize
			\begin{tabular}{|c|c|c|}
				\hline
				$n$ & $\frac{n_{\text{nz}}}{n_{\text{entries}}}$ & $\!\!\frac{n_{\text{nz}}}{(n_{x_1} n_{x_2} n_v^2)}\!\!$ \\ \hline
				4 &	20.05\%	& 39.3 \\ \hline
				8 &	5.52\%	& 53.05 \\ \hline
				16 &	1.463\%	& 58.98\\ \hline
				32 &	0.378\%	& 61.61 \\ \hline
				48 &	0.17\%	& 62.44 \\ \hline
				64 &	0.096\%	& 62.84 \\ \hline
			\end{tabular}
		}
	\end{minipage}
\end{table}

Since the basis functions of the discrete trial space $\cXd$ are not chosen as standard nodal basis functions but have larger support, one can ask if the choice of spaces still leads to an efficient numerical scheme. { Therefore, in \cref{table:sparsity} we list the ratio of nonzero elements in the stiffness matrix, which decreases significantly with larger problem sizes. However, as $\cXd$ includes solutions of problems in $\Omega_v$, the nonzero elements increase linearly in the dimension of the $x$-discretization and quadratically in the dimension of $V_h$.
}

\section{Conclusions}

In this paper, we present a stable Petrov-Galerkin discretization of a kinetic Fokker-Planck equation. 
Based on an estimate for the dual inf-sup constant of the bilinear form, where ``stable pairs'' of trial and test functions are introduced, we propose a discretization where these pairs are directly built into the spaces: 
By defining the discrete trial space dependent on the chosen discrete test space through the application of the kinetic transport and the inverse velocity Laplace-Beltrami operator, we obtain a well-posed numerical scheme with the same lower bound of the discrete inf-sup constant as for the continuous problem independently of the mesh size.  
We show that under suitable conditions on the data functions these spaces can be computed efficiently. 
Numerical experiments show { favorable convergence orders of the discretization error for a manufactured solution of the stationary equation (order 2 in $x$ both in the $L^2$-norm and the $\cX$-norm, order 2 and 1 in $v$ for the respective norms). For both the examined time-dependent and stationary test cases, the estimate of the discrete inf-sup constant is sharp up to a factor of $\sqrt{2}$. }

The new method is especially beneficial for spaces with few degrees of freedom in the velocity domain. 
Therefore, a promising application might be a combination with a hierarchical model order reduction scheme such as \cite{BLOS16}, which realizes small spaces in the velocity domain and has stability problems that might be resolved using the new method.

\appendix

\section{Proofs of function space results} \label{sect:app_proofs}

\begin{proof}[Proof of \cref{lem:product_continuous}]
	We estimate $\|\phi f\|_{H^1_{\mathrm{FP}}(\Omega)}$. Using the definition of the $V$-norm  and the product rule we obtain for the first term\footnote{As introduced in \cref{sect:var_form}, we write $\cX = L^2(\Omega_{t,x},V)$.}
	\begin{align} \label{eq:est_fphi_norm_X}
	\|\phi f\|_{\cX}^2 
	&= 
	\|\phi f\|_{L^2(\Omega)}^2
	+ \|(\nabla_v \phi )f +  \phi \nabla_v f\|_{L^2(\Omega)}^2 \nonumber\\
	&\leq 
	\|\phi^2\|_{L^\infty(\Omega)} \|f\|_{L^2(\Omega)}^2
	+ 2\||\nabla_v \phi|^2\|_{L^\infty(\Omega)} \|f\|_{L^2(\Omega)}^2
	+ 2\|\phi^2\|_{L^\infty(\Omega)} \|\nabla_v f\|_{L^2(\Omega)}^2 \nonumber\\
	&\leq 2\left ( \|\phi\|_{L^\infty(\Omega)}^2 + \|\nabla_v \phi\|_{L^\infty(\Omega)}^2 \right ) \|f\|_{\cX}^2.
	\end{align}
	By using the product rule, the {identification} $\la \cdot,\cdot\ra_{\cX'\!,\cX} = (\cdot,\cdot)_{L^2(\Omega)}$, and the {density} of $C^\infty(\Omega)$ in $H^1_{\mathrm{FP}}(\Omega)$ we see that for arbitrary $\psi \in \cX$ it holds
	\begin{align*}
	\la {k} \cdot \nabla_{t,x} &(\phi f), \psi \ra_{\cX',\cX} 
	= \la {k} \cdot \nabla_{t,x} f, \phi  \psi \ra_{\cX',\cX}
	+
	( f ({k} \cdot \nabla_{t,x} \phi ) ,\psi )_{L^2(\Omega)} \\
	&\leq
	\|{k} \cdot \nabla_{t,x} f \|_{\cX'} \|\phi  \psi \|_{\cX} 
	+  \|f ({k} \cdot \nabla_{t,x} \phi ) \|_{L^2(\Omega)} \| \psi\|_{L^2(\Omega)}. \\
	&
	\hspace{-1em}\overset{\cref{eq:est_fphi_norm_X} }{\leq} 
	\sqrt{2} \left  ( \|\phi\|_{L^\infty(\Omega)}^2 + \|\nabla_v \phi \|_{L^\infty(\Omega)}^2 \right )^{\frac{1}{2}}
	\|{k} \cdot \nabla_{t,x} f \|_{\cX'} \|\psi \|_{\cX} \\
	&\quad +
	\|{k} \cdot \nabla_{t,x} \phi\|_{L^\infty(\Omega)} 
	\|f \|_{L^2(\Omega)} 
	\|\psi\|_{L^2(\Omega)}\\
	&\leq \sqrt{2} \left (\|\phi \|_{L^\infty(\Omega)} + \|\nabla_v \phi \|_{L^\infty(\Omega)} + \|{k} \cdot \nabla_{t,x} \phi \|_{L^\infty(\Omega)} \right )
	\|f\|_{H^1_{\mathrm{FP}}(\Omega)} \|\psi\|_{\cX}.
	\end{align*}
	We thus have
	\begin{equation}\label{eq:est_dtx_fphi_X'}
	\begin{split}
	\|{k} \cdot \nabla_{t,x} (\phi f)\|_{\cX'} 
	\leq  2\sqrt{2}
	\left (
	\|\phi\|_{L^\infty(\Omega)} + \|\nabla_v \phi \|_{L^\infty(\Omega)} 
	\right . \;&\\
	\quad\left . + \|{k} \cdot \nabla_{t,x} \phi \|_{L^\infty(\Omega)} 
	\right )&
	\|f\|_{H^1_{\mathrm{FP}}(\Omega)}.
	\end{split}
	\end{equation}
	Combining \eqref{eq:est_fphi_norm_X} and \eqref{eq:est_dtx_fphi_X'} and using that $\left |{k} \right |$ is bounded in $\Omega$, we thus have
	\begin{align*}
	\|\phi f\|_{H^1_{\mathrm{FP}}(\Omega)}
	\leq 
	C \|\phi\|_{C^1(\Omega)} \|f\|_{H^1_{\mathrm{FP}}(\Omega)}. 
	\end{align*}
\end{proof}

\section*{Acknowledgments}
We would like to thank Dr.\ M.\ Schlottbom (University of Twente) and Prof.\ M.\ Ohlberger (University of Münster) for fruitful discussions.

\bibliographystyle{siamplain}

\end{document}